\newtheorem{theorem}{Theorem}[section]
\newtheorem{lemma}{Lemma}[section]
\newtheorem{corollary}{Corollary}[section]
\newtheorem{proposition}{Proposition}[section]
\newtheorem{definition}{Definition}[section]
\newtheorem{remark}{Remark}[section]
\newcommand{\thickvline}{\vrule width 0.6pt}
\newcommand{\thickhline}{\noalign{\hrule height 0.8pt}} 
\newcommand{\mediumhline}{\noalign{\hrule height 0.6pt}}  
\journal{Nuclear Physics B}
\begin{document}

\begin{frontmatter}



\title{Effective algorithms for tensor-train decomposition via the UTV framework
}


\author[author1]{Yuchao Wang} 
\ead{ycwang@cdut.edu.cn}
\affiliation[author1]{organization={School of Mathematical Sciences},
            addressline={Chengdu University of Technology}, 
            city={Chengdu},
            postcode={610059}, 
            state={Sichuan},
            country={China}}

\author[author2]{Maolin Che} 
\ead{chncml@outlook.com}
\affiliation[author2]{organization={School of Mathematics and Statistics},
	addressline={Guizhou University}, 
	city={Guiyang},
	postcode={550025}, 
	state={Guizhou},
	country={China}}

\author[author3]{Yimin Wei \corref{cor1}} 
\ead{ymwei@fudan.edu.cn}
\cortext[cor1]{Corresponding author.}
\affiliation[author3]{organization={School of Mathematical Sciences and Key Laboratory of Mathematics for Nonlinear Sciences},
	addressline={Fudan	University}, 
	city={Shanghai},
	postcode={200433}, 
	state={Shanghai},
	country={China}}
	
\begin{abstract}
The tensor-train (TT) decomposition is widely used to compress large tensors into a more compact form by exploiting their inherent data structures. A fundamental approach for constructing the TT format is the well-known TT-SVD method, which performs singular value decompositions (SVDs) on the successive matrices sequentially. But in practical applications, it is often unnecessary to compute full SVDs. In this article, we propose the TT-UTV method, a general framework that incorporates the family of UTV-type algorithms to compute the TT decomposition. By leveraging the advantages of rank-revealing UTV factorizations, the method efficiently handles large-scale tensors at a reduced computational cost. We analyze the error bounds on the accuracy of these algorithms in both the URV and ULV cases, and then recommend different sweep patterns for these two cases. Based on the theoretical analysis, we also formulate the rank-adaptive algorithms with prescribed accuracy. Numerical experiments on various applications, including magnetic resonance imaging data completion, are performed to illustrate their good performance in practice.
\end{abstract}

\begin{keyword}
	Tensor-train decomposition\sep UTV decomposition \sep Rank-adaptive algorithms \sep Data completion.



\end{keyword}

\end{frontmatter}



\section{Introduction}

Tensors provide a natural representation for high-dimensional data and nonlinear interactions by preserving their inherent multi-way structure. This makes them particularly suitable for modeling complex real-world systems, such as color images and videos \cite{Salman2023fast, lu2011survey}, hypergraphs \cite{cui2024discrete}, genetic sequences \cite{hore2016tensor}, and social interactions \cite{ding2018tensor, Che2024tensor}. By maintaining the structural integrity of multi-dimensional data, tensor methods facilitate more systematic and interpretable analysis across diverse fields including machine learning, computer vision, bioinformatics, network science, and beyond.
However, directly processing these higher-order tensors demands a significant amount of resources, as the time and space complexity grow exponentially with tensor order. This phenomenon is known as the ``curse of dimensionality'' \cite{Indyk1998}. Fortunately, tensors in the real world often exhibit a low-rank structure, and the tensor-train decomposition \cite{oseledets2011tensor}, also known as the matrix product state representation \cite{Perez-Garcia2007MPS}, can exploit the inherent low-rank structure to mitigate the curse of dimensionality in many practical scenarios.


The tensor-train (TT) decomposition has found widespread applications in areas such as machine learning \cite{yang2017tensor,novikov2015tensorizing,CHEN2022108337}, quantum many-body physics \cite{Schollwock2011wave}, signal processing \cite{miron2020tensor, Sidiropoulos2017}, data completion \cite{xu2023tensor,Steinlechner2016, Cai2022provable}, and numerical partial differential equations \cite{Dolgov2021,Richter2021Solving}. For instance, the TT decomposition has been employed to compress deep neural networks while maintaining competitive performance \cite{yang2017tensor,novikov2015tensorizing}. In quantum physics, the TT format enables efficient representation and manipulation of wave functions and operators, thereby significantly reducing computational demands \cite{Schollwock2011wave}. 
Besides, TT-based methods are increasingly applied in solving high-dimensional partial differential equations, where traditional methods often become computationally prohibitive \cite{Dolgov2021,Richter2021Solving}, to name a few.

The research on efficient algorithms for constructing TT decomposition of large-scale tensors remains an active area. For a $d$th-order tensor, the TT-SVD algorithm \cite{oseledets2011tensor} sequentially utilizes $d-1$ singular value decompositions to extract core tensors one by one, which serves as a foundational approach to decomposing a higher-order tensor into the TT format. TT-cross approximation \cite{oseledets2010tt} replaces the full SVD computation in the TT-SVD algorithm with skeleton decomposition, making it applicable to large-scale tensors that contain missing entries or require data-dependent computation. The alternating least squares method, as an optimization method, computes the TT decomposition by solving a sequence of least squares problems to iteratively refine the TT cores \cite{Holtz2012the}. These optimization methods typically require TT ranks to be specified in advance, and their convergence behavior is dependent on the initial values and the ill-conditioning of the underlying problem. When high precision is required, one strategy is to use the quasi-optimal solution obtained from numerical algebraic methods as an initial guess, followed by refinement via optimization-based approaches. In recent years, experts have also proposed parallel algorithms to reduce the runtime for computing TT decomposition; their subroutines still rely on the matrix SVD \cite{zniyed2020tt, shi2023parallel}. 

The TT-SVD algorithm has attracted substantial research interest, owing to its numerical stability, finite-step computation, and controllable accuracy. It has been successfully applied to real-world scenarios and has served as the foundation for many subsequent algorithms for TT decomposition. For instance, in high-dimensional data completion using the TT model, Riemannian optimization methods are often employed, which utilize the TT-SVD algorithm at each iteration to compress the iterative tensor \cite{Steinlechner2016, Cai2022provable}. For the joint estimation of two-dimensional direction-of-departure and direction-of-arrival in MIMO radar systems, recent studies have introduced TT-based parameter estimation frameworks, where the TT-SVD algorithm is employed as a key step to decompose the tensors constructed from signal data. These methods demonstrate improvements in preserving the intrinsic multidimensional structure and estimation accuracy \cite{xie2025coupled, xie2025higher}. In recent years, randomized algorithms have also been developed within the TT-SVD framework to reduce computational complexity \cite{huber2018randomized, che2026efficient}.

However, in many applications, the SVD is computationally demanding and difficult to update. These limitations have motivated the development of the rank-revealing two-sided orthogonal decomposition, also called the UTV decomposition \cite{Stewart1992an}. The UTV decomposition provides approximately the same numerical rank and subspace information as SVD but at a lower computational cost \cite{Stewart1992an,Fierro1997low,Fierro1995bounding}. Furthermore, UTV decomposition can be efficiently implemented on modern hardware architectures by leveraging block matrix operations that are amenable to parallel processing \cite{Martinsson2019rand}. Due to these virtues, it has become a widely used alternative to SVD in applications such as image denoising and signal processing \cite{yang1995projection, Kaloorazi2018Com}. Recently, the UTV decomposition has also been extended to the decompositions of higher-order tensors. For example, 
a truncated multilinear UTV decomposition \cite{Vandecappelle2022from} is introduced and numerically validated as a viable practical alternative to the truncated multilinear SVD \cite{DeLathauwer2000a,Vannieuwenhoven2012a}; the t-UTV decomposition \cite{che2022efficient} is proposed for third-order tensors as a substitute for the t-SVD \cite{kilmer2011factorization} based on the tensor t-product. UTV decomposition for quaternion matrices and tensors, together with their randomized algorithms, have also been developed \cite{YANG2025111580}. 

In this article, we investigate whether the UTV decomposition can be leveraged to construct the TT decomposition as an economic alternative to the TT-SVD. The goal is to save computational costs by utilizing the advantages of UTV decomposition over SVD for large-scale unfolding matrices. This new approach, termed TT-UTV, is designed to operate under both fixed TT-ranks and fixed precision. There exist two types of TT-UTV algorithms corresponding to the URV and ULV cases, respectively. We provide the error analysis for these algorithms and demonstrate their effectiveness through various numerical experiments, including video recovery and magnetic resonance imaging data completion.

The rest of this paper is organized as follows. Section \ref{sec: pre} reviews the necessary preliminary notions. In Section \ref{sec: 3}, we introduce the TT-ULV algorithm in a left-to-right sweep and the TT-URV algorithm in a right-to-left sweep, respectively. We derive rigorous error bounds for them, which explain how the UTV decompositions are used to achieve accurate and efficient TT decomposition; their computational cost is also analyzed. In Section \ref{sec: 4}, we conduct performance evaluations of the TT-UTV algorithm with TT-SVD in various applications. The conclusions are drawn in Section \ref{sec: 5}.

\section{Preliminaries} 
\label{sec: pre}

\textit{Notations}. A $d$th-order tensor is a multidimensional array with $d$ free indices. Matrices are second-order tensors, and vectors are first-order tensors. We mainly inherit the usage of notations from the comprehensive work of Kolda and Bader \cite{kolda2009tensor}. The calligraphic capital letters such as $\mathcal{A}$, boldface capital letters such as $\mathbf{A}$, and boldface lowercase letters such as $\mathbf{a}$ are used to denote tensors, matrices, and vectors, respectively. $\mathbb{R}^{I_1\times \cdots \times I_d}$ denotes the set of all $d$th-order tensors of dimensions $(I_1, I_2, \dots, I_d)$ over the real number field $\mathbb{R}$, where $I_k$ is the dimension of the $k$th mode of the tensor. The entries of $\mathcal{A} \in \mathbb{R}^{I_1 \times \cdots \times I_d}$ are accessed by $\mathcal{A}_{i_1i_2 \cdots i_d}$ for $1 \leq i_k \leq I_k$. Table~\ref{table: Nomenclature} shows the notations frequently used in this article.

\begin{table}[htbp!]
\centering
\caption{Nomenclature}
\label{table: Nomenclature}
\begin{tabular}{|l|l|}
\hline $\mathcal{A} \in \mathbb{R}^{I_1\times \cdots \times I_d}$ & Input tensor of the algorithms \\
\hline $\widehat{\mathcal{A}}\in \mathbb{R}^{I_1\times \cdots \times I_d}$ & Output tensor of the algorithms \\
\hline $\left\{r_k\right\}_{k=0}^d$ & The TT-ranks of tensor $\mathcal{A}$ \\
\hline $\mathcal{G}^{(k)} \in \mathbb{R}^{r_{k-1}\times I_k \times r_k}$ & The $k$th TT core of tensor $\mathcal{A}$ \\
\hline $\mathbf{U}^{(k)} \in \mathbb{R}^{r_{k-1}I_k\times r_k}$ & The matrix to form the $k$th TT core $\mathcal{G}^{(k)}$ \\
\hline $\mathbf{U}_k, \mathbf{V}_k, k=1,2$ & Partitioned matrices of $\mathbf{U}$ and $\mathbf{V}$ in UTV decomposition \\
\hline $\mathbf{L}_{ij}, \mathbf{R}_{ij}, i,j=1,2$ & Partitioned matrices of $\mathbf{T}$ in UTV decomposition \\
\hline $\mathbf{A}_k \in \mathbb{R}^{(I_1\cdots I_k )\times (I_{k+1}\cdots I_d)}$ & The $k$th unfolding matrix of tensor $\mathcal{A}$ (Definition \ref{Def: Unfoldmat}) \\
\hline $\mathbf{E}$ & The remaining part of the truncated UTV decomposition \\
\hline $\mathbf{I}_m$ & Identity matrix of size $m\times m$ \\
\hline $\mathbf{O}_{m\times n}$ & Zero matrix of size $m\times n$ \\
\hline
\end{tabular}
\end{table}

\subsection{Tensor basics}


The TT decomposition represents a higher-order tensor $\mathcal{A} \in \mathbb{R}^{I_1\times \cdots \times I_d}$ as the contractions of small factor tensors
\begin{equation}\label{TT}
	\mathcal{A}_{i_1\cdots i_d} = \sum_{\alpha_0=1}^{r_0}\sum_{\alpha_1=1}^{r_1}  \cdots \sum_{\alpha_d=1}^{r_d}  \mathcal{G}^{(1)}_{\alpha_0i_1\alpha_1}\mathcal{G}^{(2)}_{\alpha_1 i_2 \alpha_2} \cdots \mathcal{G}^{(d)}_{\alpha_{d-1} i_d\alpha_d}, 
\end{equation}
where $r_0=r_d=1$, third-order tensors $\mathcal{G}^{(k)} \in \mathbb{C}^{r_{k-1}\times I_k \times r_k}$ for $k=1,2,\ldots,d$ are called TT-cores, and $(r_1, r_2,\ldots, r_{d-1})$ are TT-ranks. Fig.~\ref{Graph: TT} is a graphical representation of the TT decomposition of a $d$th-order tensor, with core tensors depicted as blue blocks and indices represented as connecting lines. It looks like a train, as its name implies. By virtue of the TT decomposition, the tensor $\mathcal{A} \in \mathbb{C}^{I_1\times \cdots \times I_k}$ can be compressed into $ {\textstyle \sum_{k=1}^{d}} r_{k-1}I_kr_k $ parameters, which become linearly dependent on the tensor order $d$. Lower TT-ranks imply lower memory consumption and computational costs. 
\begin{figure}[htbp!]
	\centering
	\includegraphics[width=0.8\textwidth]{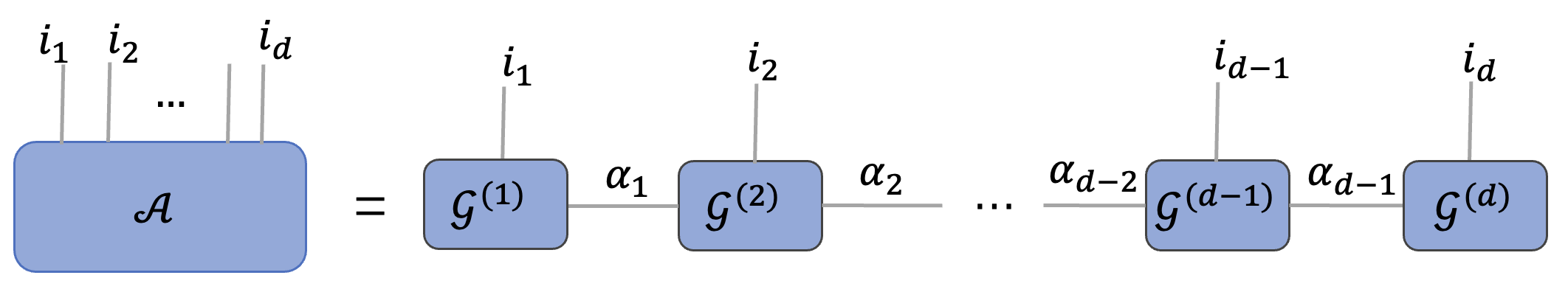}
	\caption{Graphical illustration of the TT decomposition for a $d$th-order tensor.}
	\label{Graph: TT}
\end{figure}

Tensor unfolding, the process of rearranging the elements of a tensor into a matrix or vector, is a frequently used operation in tensor analysis and numerical computations \cite{kolda2009tensor}. The reverse lexicographic ordering (as the `reshape' command in MATLAB) is one of the most commonly used approaches for integrating multiple indices into a single long index. Concretely, for the tensor dimensions $\mathbf{I} = \left( I_1,\dots, I_d \right)$ and a corresponding index vector $\mathbf{i} = \left(i_1,\dots, i_d\right)$, the reverse lexicographic order map
\begin{equation*} 
	\mathrm{ivec}(\mathbf{i}, \mathbf{I}) := i_1+\sum_{k=2}^{d} (i_k-1)\prod_{j=1}^{k-1}I_j.   
\end{equation*}

\begin{definition}[The $k$th unfolding matrix \cite{oseledets2011tensor}]
\label{Def: Unfoldmat}
	For a $d$th-order tensor $\mathcal{A} \in \mathbb{R}^{I_1 \times \cdots \times I_d}$, the matrix $\mathbf{A}_k \in \mathbb{R}^{(I_1 \cdots I_k) \times (I_{k+1}\cdots I_d)}$ is called the $k$th unfolding matrix of $\mathcal{A}$ such that
	\begin{equation*}
		(\mathbf{A}_k)_{i,j} = \mathcal{A}_{i_1\cdots i_k i_{k+1}\cdots i_d},
	\end{equation*}
	where $i = \mathrm{ivec}\left(( i_1,\dots,i_k ), (I_1,\dots, I_k )\right)$ and $j = \mathrm{ivec}\left( (i_{k+1},\dots,i_d ), ( I_{k+1},\dots, I_d)\right)$. 
\end{definition}

\begin{definition}[Orthogonal TT-cores \cite{oseledets2011tensor}]
	The TT-cores $\{\mathcal{G}^{(k)}, k=1,\dots, d\}$ in Eq.  \eqref{TT} are called left-orthogonal if for $k=1,\dots, d-1$,
	$ \mathbf{G}^{(k)\top}_2\mathbf{G}^{(k)}_2 = \mathbf{I}_{r_k};  $
	they are right-orthogonal if for $k=2,\dots, d$,
	$ \mathbf{G}^{(k)}_1\mathbf{G}^{(k)\top}_1 = \mathbf{I}_{r_{k-1}}, $
    where $\mathbf{G}^{(k)}_1 \in \mathbb{R}^{r_{k-1} \times r_kI_k }$ and $\mathbf{G}^{(k)}_2 \in \mathbb{R}^{r_{k-1}I_k \times r_k}$ are the first and second unfolding matrices of the TT-core $\mathcal{G}^{(k)}$, respectively.
\end{definition}
The TT-SVD algorithm computes the TT format \eqref{TT} with left-orthogonal cores by a single left-to-right sweep or the right-orthogonal cores by a right-to-left sweep; more details can be found in \cite{oseledets2011tensor}.

\begin{definition}[Mode product \cite{kolda2009tensor}]
	The mode-$k$ product of $\mathcal{S} \in \mathbb{R}^{I_1 \times \cdots \times I_d}$ with a matrix $\mathbf{Q} \in \mathbb{R}^{J \times I_k}$ is an $I_1 \times \cdots \times I_{k-1} \times J \times I_{k+1} \times \cdots \times I_d$ tensor, given by
	\[ \left(\mathcal{S} \times _k \mathbf{Q} \right)_{i_1\cdots i_{k-1}ji_k\cdots i_d} = \sum_{i_k=1}^{I_k} \mathcal{S}_{i_1\cdots i_{k-1}i_k i_{k+1}\cdots i_d} \mathbf{Q}_{ji_k}. \]
\end{definition}

Apart from the TT decomposition, the Tucker format \cite{tucker1966some} is another compact form for large-scale tensors, which represents a $d$th-order tensor via the mode products of a smaller-scaled core tensor and $d$ factor matrices: 
\begin{align} \label{Tucker}
	\mathcal{A} = \mathcal{S} \times_1 \mathbf{Q}^{(1)} \times_2 \cdots \times _d \mathbf{Q}^{(d)}. 
\end{align}
Directly from these definitions, we can derive the following proposition.
\begin{proposition}\label{Pro: 2.1}
	Let $\mathcal{A}$ be given in the Tucker format \eqref{Tucker}. Then the $k$th unfolding matrices $\mathbf{A}_k$ and $\mathbf{S}_k$ are related by the matrix equation
	\[ \mathbf{A}_k = \left( \mathbf{Q}^{(k)} \otimes \cdots \otimes \mathbf{Q}^{(1)} \right) \mathbf{S}_k \left( \mathbf{Q}^{(d)} \otimes \cdots \otimes \mathbf{Q}^{(k+1)} \right)^{\top},  \]
	where $\otimes$ denotes the matrix Kronecker product.
\end{proposition}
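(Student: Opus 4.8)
The plan is to establish the identity entrywise: I will show that for every admissible pair of fused multi-indices $(i,j)$ the $(i,j)$ entry of the right-hand side equals $\mathbf{A}_k(i,j) = \mathcal{A}_{i_1\cdots i_d}$. Two ingredients suffice. The first is the entrywise form of the Tucker format, obtained by unrolling the mode-product definition $d$ times:
\[ \mathcal{A}_{i_1\cdots i_d} = \sum_{j_1=1}^{r_1}\cdots\sum_{j_d=1}^{r_d} \mathcal{S}_{j_1\cdots j_d}\prod_{s=1}^{d}(\mathbf{U}_s)_{i_sj_s}. \]
The second is the compatibility between the reverse lexicographic map $\mathrm{ivec}$ in \eqref{eq: ivec} and the matrix Kronecker product, which I isolate as the key step below.

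Writing $\mathbf{L} := \mathbf{U}_k\otimes\cdots\otimes\mathbf{U}_1$ and $\mathbf{R} := \mathbf{U}_d\otimes\cdots\otimes\mathbf{U}_{k+1}$, I will verify that
\[ \mathbf{L}(i,p) = \prod_{s=1}^{k}(\mathbf{U}_s)_{i_sj_s}, \qquad \mathbf{R}(j,q) = \prod_{s=k+1}^{d}(\mathbf{U}_s)_{i_sj_s}, \]
where $i=\mathrm{ivec}((i_1,\dots,i_k),(I_1,\dots,I_k))$, $p=\mathrm{ivec}((j_1,\dots,j_k),(r_1,\dots,r_k))$, and $j,q$ are the analogous fused indices for the last $d-k$ modes. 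This is precisely the statement that listing the Kronecker factors in the reversed order $\mathbf{U}_k,\dots,\mathbf{U}_1$ makes the first tensor index vary fastest, matching the convention that $\mathrm{ivec}$ places $i_1$ in the fastest position; it is the reason the factors appear reversed in the proposition. With these two identities in hand, I expand the matrix triple product as $(\mathbf{L}\mathbf{S}_k\mathbf{R}^\top)(i,j)=\sum_p\sum_q \mathbf{L}(i,p)\,(\mathbf{S}_k)(p,q)\,\mathbf{R}(j,q)$, substitute $(\mathbf{S}_k)(p,q)=\mathcal{S}_{j_1\cdots j_d}$ from the definition of the unfolding of $\mathcal{S}$, and use the bijectivity of $\mathrm{ivec}$ to convert the sums over $p$ and $q$ into sums over $j_1,\dots,j_d$. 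The product of the three factors then collapses to $\sum_{j_1,\dots,j_d}\mathcal{S}_{j_1\cdots j_d}\prod_{s=1}^{d}(\mathbf{U}_s)_{i_sj_s}$, which is exactly $\mathcal{A}_{i_1\cdots i_d}=\mathbf{A}_k(i,j)$.

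The routine arithmetic, namely unrolling the mode products and reindexing the double sum, is straightforward, so the one point requiring care is the index-ordering convention underlying the Kronecker identities. I must confirm that the reverse lexicographic order makes $i_1$ (and $j_1$) the fastest-varying index, and hence that the Kronecker product must carry its factors in the reversed order $\mathbf{U}_k\otimes\cdots\otimes\mathbf{U}_1$ rather than $\mathbf{U}_1\otimes\cdots\otimes\mathbf{U}_k$; pinning down this orientation correctly is the whole substance of the argument, after which the remaining steps are bookkeeping.
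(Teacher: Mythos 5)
Your argument is correct. The paper offers no proof of Proposition \ref{Pro: 2.1} at all---it is stated as following ``directly from these definitions''---and your entrywise verification supplies exactly the details being elided: the unrolled Tucker formula $\mathcal{A}_{i_1\cdots i_d}=\sum_{j_1,\dots,j_d}\mathcal{S}_{j_1\cdots j_d}\prod_s(\mathbf{U}_s)_{i_sj_s}$, plus the observation that $\mathrm{ivec}$ in \eqref{eq: ivec} makes $i_1$ the fastest-varying index while $\mathbf{A}\otimes\mathbf{B}$ makes the index of $\mathbf{B}$ fastest, which is precisely why the factors appear in the reversed order $\mathbf{U}_k\otimes\cdots\otimes\mathbf{U}_1$. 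You correctly identified the index-ordering convention as the only point of substance; the rest is bookkeeping, and your handling of it is sound.
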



\begin{proposition}[\cite{golub2013matrix}] \label{Pro: MatrixKron}
	Let $\mathbf{A}\in \mathbb{R}^{m\times n}, \mathbf{B} \in \mathbb{R}^{p\times q}$,  $\mathbf{C}\in \mathbb{R}^{n\times l}$, and $\mathbf{D}\in \mathbb{R}^{q \times r}$. Then $(\mathbf{A}\otimes \mathbf{B})^{\top} = \mathbf{A}^{\top} \otimes \mathbf{B}^{\top}$, and $(\mathbf{A}\otimes \mathbf{B}) (\mathbf{C}\otimes \mathbf{D}) = (\mathbf{A}\mathbf{C})\otimes (\mathbf{B} \mathbf{D})$.
\end{proposition}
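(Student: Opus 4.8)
The plan is to verify both identities directly from the block definition of the Kronecker product given just above the statement, treating the two claims separately since each rests on a different elementary mechanism.

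For the transpose identity $(\mathbf{A}\otimes\mathbf{B})^{\top} = \mathbf{A}^{\top}\otimes\mathbf{B}^{\top}$, I would argue at the level of individual entries. Writing the global row index of $\mathbf{A}\otimes\mathbf{B}$ as $(i-1)p+k$ and the global column index as $(j-1)q+l$, the defining block structure (whose $(i,j)$ block is $a_{ij}\mathbf{B}$) gives that this entry equals $a_{ij}b_{kl}$. Transposition swaps the two composite indices, so the $\big((j-1)q+l,\,(i-1)p+k\big)$ entry of $(\mathbf{A}\otimes\mathbf{B})^{\top}$ is $a_{ij}b_{kl}$. On the other hand, the same composite position in $\mathbf{A}^{\top}\otimes\mathbf{B}^{\top}$ carries the value $(\mathbf{A}^{\top})_{ji}(\mathbf{B}^{\top})_{lk}=a_{ij}b_{kl}$. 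Since the entries agree for all admissible $i,j,k,l$, the two matrices coincide.

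For the mixed-product property, I would instead exploit block matrix multiplication rather than scalar entries. Both $\mathbf{A}\otimes\mathbf{B}$ and $\mathbf{C}\otimes\mathbf{D}$ are partitioned into blocks that are scalar multiples of $\mathbf{B}$ and $\mathbf{D}$, respectively, and the block partitions are conformable precisely because the inner dimensions $n$ and $q$ match. Multiplying block-wise, the $(i,j)$ block of the product is $\sum_{s=1}^{n}(a_{is}\mathbf{B})(c_{sj}\mathbf{D}) = \Big(\sum_{s=1}^{n}a_{is}c_{sj}\Big)\mathbf{B}\mathbf{D} = (\mathbf{A}\mathbf{C})_{ij}\,\mathbf{B}\mathbf{D}$, where the scalars $a_{is}$ and $c_{sj}$ are pulled out of the matrix product. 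This is exactly the $(i,j)$ block of $(\mathbf{A}\mathbf{C})\otimes(\mathbf{B}\mathbf{D})$, and a dimension count ($mp\times lr$ on both sides) confirms consistency.

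Neither identity presents a genuine conceptual obstacle; both are standard textbook facts. The only real care required is bookkeeping: fixing a single consistent convention for how the index pair $(i,k)$ (respectively $(j,l)$) is fused into one composite row (respectively column) index, and then checking that transposition and block multiplication respect that convention. Once the indexing is pinned down, both equalities reduce to rearranging scalar products, so I expect the write-up to be short.
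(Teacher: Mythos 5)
Your proof is correct. The paper does not prove this proposition at all --- it is quoted directly from the reference \cite{golub2013matrix} as a standard textbook fact --- so there is nothing to compare against; your entrywise verification of the transpose identity and blockwise verification of the mixed-product property are exactly the standard arguments one would find in that reference, and the index bookkeeping you describe is handled consistently.
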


\subsection{UTV decompositions}
For the matrix $\mathbf{A} \in \mathbb{R}^{m\times n}$, its UTV decomposition takes the form $\mathbf{A} = \mathbf{UTV}^{\rm \top}$, where $\mathbf{U}\in \mathbb{R}^{m\times n}$ and $\mathbf{V} \in \mathbb{R}^{n\times n}$ have orthonormal columns, and $\mathbf{T}$ is a triangular matrix. There are two different cases of UTV decomposition corresponding to the structure of the middle matrix $\mathbf{T}$. If $\mathbf{T}$ is lower triangular, the decomposition is called ULV decomposition:
\begin{equation} \label{eq: ulv}
	\mathbf{A} = \mathbf{U}\begin{bmatrix}
		\mathbf{L}_{11} & \mathbf{O}_{r\times (n-r)} \\
		\mathbf{L}_{21} & \mathbf{L}_{22}
	\end{bmatrix} \mathbf{V}^{\top}.
\end{equation}
Here $\mathbf{L}_{11}$ and $\mathbf{L}_{22}$ are lower triangular, $\mathbf{L}_{21} \in \mathbb{R}^{(m-r)\times r}$ with small entries, and $\mathbf{O}$ denotes the zero matrix whose elements are all 0. Moreover, suppose the numerical rank of $\mathbf{A}$ is $r$, then the ULV decomposition is said to be rank-revealing if
\begin{equation} \label{rrULV}
	\sigma_{\rm min}(\mathbf{L}_{11}) = O(\sigma_r) \quad \mathrm{and} \quad \left\| \begin{bmatrix}
		\mathbf{L}_{21}& \mathbf{L}_{22}
	\end{bmatrix} \right\|_2 = O(\sigma_{r+1}). 
\end{equation}
If the middle matrix $\mathbf{T}$ is upper-triangular, then the decomposition called URV decomposition, which takes the form
\begin{equation} \label{eq: urv}
	\mathbf{A} = \mathbf{U}\begin{bmatrix}
		\mathbf{R}_{11} & \mathbf{R}_{12} \\
		\mathbf{O}_{(m-r)\times r}  & \mathbf{R}_{22}
	\end{bmatrix} \mathbf{V}^{\top},
\end{equation}
where $\mathbf{R}_{11}\in \mathbb{R}^{r\times r}$, $\mathbf{R}_{22} \in \mathbb{R}^{(m-r)\times (n-r)}$ are upper triangular, and $\mathbf{R}_{12} \in \mathbb{R}^{r \times (n-r)}$ contains small entries. The URV decomposition is said to be rank-revealing if
\begin{equation} \label{rrURV}
	\sigma_{\rm min}(\mathbf{R}_{11}) = O(\sigma_r) \quad \mathrm{and} \quad \left\| \begin{bmatrix}
		\mathbf{R}_{12}^{\top}& \mathbf{R}_{22}^{\top}
	\end{bmatrix} \right\|_2 = O(\sigma_{r+1}).
\end{equation}

The smaller the norm of the off-diagonal block, the better the truncated low-rank approximations in Eqs. \eqref{eq: ulv} and \eqref{eq: urv}. In fact, the UTV decomposition of a matrix is not unique, and various matrix decompositions proposed in the literature can be regarded as UTV variants, such as the rank-revealing QR decomposition \cite{CHAN198767}, the pivoted QLP decomposition \cite{stewart1999qlp}, and the SVD, which is also a special case of UTV decomposition when the middle matrix is diagonal. There are many efficient algorithms for UTV decomposition. Stewart's foundational work \cite{Stewart1992an} introduced the algorithms for UTV decomposition that are suited for high-rank matrices, while Fierro et al. \cite{Fierro1997low,fierro1999utv} later proposed low-rank UTV algorithms and designed a Matlab toolbox "UTV Tools" for efficient rank-revealing factorizations. More recently, randomized algorithms for UTV decomposition have been developed \cite{Martinsson2019rand,kaloorazi2020efficient}. In practice, the UTV factorization often attains comparable accuracy at a lower computational cost than classical SVD.

\section{The proposed methods}
\label{sec: 3}

In this section, we propose the TT-UTV algorithms for efficiently computing the approximate TT format by sequentially decomposing the cores via UTV factorizations. We discuss two variants of the algorithm, corresponding to the ULV and URV decompositions, respectively. The feasibility of the proposed methods is rigorously supported by theoretical analysis, based on which we recommend distinct update strategies for each case and further introduce fixed-precision algorithms.

\subsection{TT-ULV case}
If the required TT-ranks are fixed, the TT-ULV algorithm is shown in Algorithm~\ref{alg:TT-UTV}, which computes the approximate TT-format tensor with left-orthogonal cores in a left-to-right sweep.
\begin{algorithm}[!htpb]
	\renewcommand{\algorithmicrequire}{\textbf{Input:}}
	\renewcommand{\algorithmicensure}{\textbf{Output:}}
	\caption{TT-ULV algorithm for fixed TT-ranks (left-to-right sweep)}
	\label{alg:TT-UTV}
	\centering
	\begin{algorithmic}[1]
		\REQUIRE A $d$th-order tensor $\mathcal{A} \in \mathbb{R}^{I_1 \times \cdots \times I_d}$, and fixed TT-ranks $\{ r_1, \dots, r_{d-1} \}$, $r_0=r_d=1$. 
		\ENSURE Left-orthogonal TT-cores $\{\mathcal{G}^{(1)}, \dots, \mathcal{G}^{(d)}\}$ of the approximation $\widehat{\mathcal{A}}$ with TT-ranks $r_k$'s.
		
		\STATE Temporary matrix: $\mathbf{C}=\mathrm{reshape}(\mathcal{A}, \left[ I_1, I_2\cdots I_d \right])$.
		\STATE for $k=1$ to $d-1$ do
		\STATE \quad  $\mathbf{C}:=\operatorname{reshape}\left(\mathbf{C},\left[r_{k-1} I_k, I_{k+1}\cdots I_d\right]\right)$.
		\STATE \quad Compute the truncated rank-$r_k$ approximation from ULV decomposition $\mathbf{C}=\mathbf{U}_1 \mathbf{L}_{11} \mathbf{V}_1^{\top}+\mathbf{E}$, where $\mathbf{L}_{11}\in \mathbb{R}^{r_k\times r_k}$ is the leading term of $\mathbf{L}$, and $\mathbf{E}$ denotes the residual part in Eq. \eqref{eq: ulv}.
		\STATE \quad New core: $\mathcal{G}^{(k)}=\operatorname{reshape}\left(\mathbf{U}_1,\left[r_{k-1}, I_k, r_k\right]\right)$.
		\STATE \quad  $\mathbf{C}= \mathbf{L}_{11} \mathbf{V}_1^{\top}$.
		\STATE end for
		\STATE $\mathcal{G}^{(d)}=\mathbf{C}$.
	\end{algorithmic}  
\end{algorithm} 

To facilitate the presentation of error analysis, we introduce some notations. The TT-ULV algorithm~\ref{alg:TT-UTV} adopts the left-to-right sweep (steps 2-7). In the $k$th iteration, step 3 computes a temporary matrix $\mathbf{C}\in \mathbb{R}^{r_{k-1}I_k\times I_{k+1}\cdots I_d}$; we denote its corresponding $(d-k+2)$th-order tensor by $\mathcal{A}^{(k)} = \mathrm{reshape}(\mathbf{C}, [r_{k-1},I_k,I_{k+1}, \cdots, I_d]) \in \mathbb{R}^{r_{k-1}\times I_k \times \cdots \times I_d}$, and the $(d-k+1)$th-order tensor by $\widetilde{\mathcal{A}}^{(k)} = \mathrm{reshape}($ $\mathbf{C}, [r_{k-1}I_k, I_{k+1},\cdots, I_d]) \in \mathbb{R}^{r_{k-1}I_k\times I_{k+1} \times \cdots \times I_d}$. Step 4 computes the column-orthogonal matrix $\mathbf{U}_1\in \mathbb{R}^{r_{k-1}I_k \times r_k}$, denoted by $\mathbf{U}^{(k)}$ in the $k$th iteration. With the notations introduced, the whole process generates temporary tensors $\{ \mathcal{A}^{(k)}\}_{k=1}^{d-1}$, $\{ \widetilde{\mathcal{A}}^{(k)}\}_{k=1}^{d-1}$, and column-orthogonal matrices $\{ \mathbf{U}^{(k)} \}_{k=1}^{d-1}$ that give rise to the TT-cores. 

\begin{proposition}\label{pro: 3.1}
	The unfolding matrices of tensors $\{ \mathcal{A}^{(k)}\}_{k=1}^{d-1}$ satisfy the following equations
	\begin{align}\label{eq: relations}
		\mathbf{A}^{(k+1)}_l = \left(\mathbf{I}_{I_{k+l-1}} \otimes \cdots \otimes \mathbf{I}_{I_{k+1}} \otimes \mathbf{U}^{(k)\top}\right)\mathbf{A}^{(k)}_{l+1}, 
	\end{align}
	for $l=1,\dots, d-k+2$ for all $k=1,\cdots, d-1$, where $\mathbf{A}_{l}^{(k)}$ is the $l$th unfolding matrix of tensor $\mathcal{A}^{(k)}\in \mathbb{R}^{r_{k-1}\times I_k \times I_{k+1}\times \cdots \times I_d}$.
\end{proposition}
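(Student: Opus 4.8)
The plan is to fix \(k\) and prove the stated identity for every admissible \(l\); no induction on \(k\) is needed, since \eqref{eq: relations} only couples the two consecutive tensors \(\mathcal{A}^{(k)}\) and \(\mathcal{A}^{(k+1)}\). The backbone of the argument is a single observation: the update performed in steps 4--6 of Algorithm~\ref{alg:TT-UTV} is nothing but a mode-1 contraction of \(\widetilde{\mathcal{A}}^{(k)}\) against \(\mathbf{U}^{(k)\top}\). I would first make this precise and then propagate it through all the unfoldings.

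First I would establish the base case \(l=1\), i.e. \(\mathbf{A}^{(k+1)}_1 = \mathbf{U}^{(k)\top}\mathbf{A}^{(k)}_2\). By the definitions, \(\mathbf{A}^{(k)}_2\) is exactly the matrix \(\mathbf{C}\) at the start of iteration \(k\) (its second unfolding merges the modes \(r_{k-1}\) and \(I_k\)), while \(\mathbf{A}^{(k+1)}_1\) is the matrix \(\mathbf{L}_{11}\mathbf{V}_1^{\top}\) produced in step 6. Writing the full ULV decomposition as \(\mathbf{C}=\mathbf{U}_1\mathbf{L}_{11}\mathbf{V}_1^{\top}+\mathbf{E}\) with \(\mathbf{E}=\mathbf{U}_2[\mathbf{L}_{21}\ \mathbf{L}_{22}]\mathbf{V}^{\top}\), the orthonormality relations \(\mathbf{U}_1^{\top}\mathbf{U}_1=\mathbf{I}_{r_k}\) and \(\mathbf{U}_1^{\top}\mathbf{U}_2=\mathbf{O}\) give \(\mathbf{U}_1^{\top}\mathbf{C}=\mathbf{L}_{11}\mathbf{V}_1^{\top}\); hence \(\mathbf{A}^{(k+1)}_1=\mathbf{U}^{(k)\top}\mathbf{A}^{(k)}_2\) (recall \(\mathbf{U}^{(k)}=\mathbf{U}_1\)). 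Equivalently, \(\mathcal{A}^{(k+1)}=\widetilde{\mathcal{A}}^{(k)}\times_1\mathbf{U}^{(k)\top}\). This is the only place where the structure of the UTV factorization enters; everything else is index bookkeeping.

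Next I would handle general \(l\) by combining two facts. (i) Since \(\mathcal{A}^{(k)}\) is obtained from \(\widetilde{\mathcal{A}}^{(k)}\) merely by splitting the leading mode \(r_{k-1}I_k\) into the two modes \(r_{k-1}\) and \(I_k\), and this splitting is compatible with the reverse-lexicographic map \(\mathrm{ivec}\), the row groupings coincide and one obtains the matrix identity \(\widetilde{\mathbf{A}}^{(k)}_l=\mathbf{A}^{(k)}_{l+1}\) for every \(l\) (both partition off the trailing indices \((i_{k+l},\dots,i_d)\) as columns). (ii) For a mode-1 product \(\mathcal{Y}=\mathcal{X}\times_1\mathbf{M}\), the \(l\)th unfolding satisfies \(\mathbf{Y}_l=(\mathbf{I}\otimes\cdots\otimes\mathbf{I}\otimes\mathbf{M})\mathbf{X}_l\), with exactly \(l-1\) identity factors acting on the untouched modes \(2,\dots,l\); this follows by the same \(\mathrm{ivec}\)/Kronecker computation underlying Proposition~\ref{Pro: 2.1}, writing a row index of \(\mathbf{Y}_l\) through \(\mathrm{ivec}\) and noting that the leading (fastest-varying) mode is the one transformed by \(\mathbf{M}\). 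Applying (ii) with \(\mathcal{Y}=\mathcal{A}^{(k+1)}\), \(\mathcal{X}=\widetilde{\mathcal{A}}^{(k)}\), \(\mathbf{M}=\mathbf{U}^{(k)\top}\), whose unfoldings are linked by (i), yields precisely \eqref{eq: relations}, the \(l-1\) identity factors being \(\mathbf{I}_{I_{k+1}},\dots,\mathbf{I}_{I_{k+l-1}}\).

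I expect the main obstacle to be the index bookkeeping in step (ii): one must carefully match the reverse-lexicographic ordering \(\mathrm{ivec}\) (in which \(i_1\) varies fastest) with the Kronecker-product ordering (in which the rightmost factor acts on the fastest-varying index), so that \(\mathbf{U}^{(k)\top}\) ends up as the rightmost factor and the identities \(\mathbf{I}_{I_{k+1}},\dots,\mathbf{I}_{I_{k+l-1}}\) appear in the stated left-to-right order. The orthogonality computation in the base case is routine; the only genuine care there is to track that the residual \(\mathbf{E}\) lives in the column space of \(\mathbf{U}_2\) and is therefore annihilated by \(\mathbf{U}_1^{\top}\), which is what makes the per-iteration relation exact rather than merely approximate.
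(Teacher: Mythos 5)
Your proposal is correct and follows essentially the same route as the paper's own proof: establish $\mathbf{A}^{(k+1)}_1=\mathbf{U}^{(k)\top}\mathbf{A}^{(k)}_2$ from the orthogonality $\mathbf{U}_1^{\top}\mathbf{E}=\mathbf{O}$ of the ULV factors, rewrite this as the mode-1 contraction $\mathcal{A}^{(k+1)}=\widetilde{\mathcal{A}}^{(k)}\times_1\mathbf{U}^{(k)\top}$, identify $\widetilde{\mathbf{A}}^{(k)}_l$ with $\mathbf{A}^{(k)}_{l+1}$, and invoke the Kronecker unfolding identity of Proposition~\ref{Pro: 2.1} to obtain the general-$l$ relation. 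The only cosmetic difference is that you spell out the $\mathrm{ivec}$/Kronecker ordering bookkeeping that the paper delegates wholesale to Proposition~\ref{Pro: 2.1}.
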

\begin{proof}
	In the $k$th iteration of the TT-ULV algorithm~\ref{alg:TT-UTV}, the rank-$r_k$ truncated ULV decomposition in step 4 can be written as
	\[ \mathbf{A}_2^{(k)} = \mathbf{U}^{(k)}\mathbf{A}_1^{(k+1)}+\mathbf{E}, \]
	where $\mathbf{A}_2^{(k)}\in \mathbb{R}^{r_{k-1}I_k\times I_{k+1}\cdots I_d}, \mathbf{U}^{(k)}\in \mathbb{R}^{r_{k-1}I_k \times r_k}, \mathbf{A}_1^{(k+1)}\in \mathbb{R}^{r_k \times I_{k+1}\cdots I_d}$. According to the orthogonality $\mathbf{U}^{(k)\top}\mathbf{E} = \mathbf{O}$, we have
	\begin{align*} \label{eq: 3.1}
		\mathbf{A}_1^{(k+1)} = \mathbf{U}^{(k)\top}\mathbf{A}_2^{(k)}.
	\end{align*}
	From the definitions of the notations $\mathcal{A}^{(k)}$ and $\widetilde{\mathcal{A}}^{(k)}$, the $l$th unfolding matrices of $\widetilde{\mathcal{A}}^{(k)}$ are exactly the $(l+1)$th unfolding matrix of $\mathcal{A}^{(k)}$, i.e., $\mathbf{A}^{(k)}_{l+1}$, for $l=1,\cdots, d-k+1$. Clearly, the above equation can be rewritten in tensor format
	\[ \mathcal{A}^{(k+1)} = \widetilde{\mathcal{A}}^{(k)} \times_1 \mathbf{U}^{(k)\top} \times_2 \mathbf{I}_{I_{k+1}} \times_3 \cdots \times_{d-k+1} \mathbf{I}_{I_d}. \]
	From the above tensor equation, the Eq. \eqref{eq: relations} holds by Proposition \ref{Pro: 2.1}.
\end{proof}

\begin{lemma}\label{lem: 3.1}
	Let $m \ge s \ge r $ be positive integers, $\mathbf{Q}\in \mathbb{R}^{s\times m}$ be a row-orthogonal matrix with unit norm rows, $\mathbf{A}\in \mathbb{R}^{m \times n}$, and $\mathbf{B} = \mathbf{Q}\mathbf{A} \in \mathbb{R}^{s\times n}$. Then the rank-r truncated error of the ULV (or URV) decomposition of $\mathbf{B}$ can be smaller than that of $\mathbf{A}$.
\end{lemma}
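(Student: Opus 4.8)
The plan is to reduce the claim to a comparison of singular values, since for a rank-revealing UTV decomposition the rank-$r$ truncation error is governed by the tail singular values. Concretely, recalling from \eqref{rrULV} and \eqref{rrURV} that the residual block of a rank-revealing ULV or URV decomposition satisfies $\|\mathbf{E}\|_2 = O(\sigma_{r+1})$, it suffices to show that the singular values of $\mathbf{B} = \mathbf{Q}\mathbf{A}$ are dominated by those of $\mathbf{A}$, i.e.\ $\sigma_i(\mathbf{B}) \le \sigma_i(\mathbf{A})$ for every $i$, and in particular $\sigma_{r+1}(\mathbf{B}) \le \sigma_{r+1}(\mathbf{A})$.

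The central step is this singular-value inequality. Since $\mathbf{Q}\in\mathbb{R}^{s\times m}$ has orthonormal rows, $\mathbf{Q}\mathbf{Q}^{\top} = \mathbf{I}_s$, and hence $\mathbf{P} := \mathbf{Q}^{\top}\mathbf{Q}\in\mathbb{R}^{m\times m}$ is symmetric and idempotent, i.e.\ an orthogonal projector onto the row space of $\mathbf{Q}$. Consequently $\mathbf{O} \preceq \mathbf{P} \preceq \mathbf{I}_m$ in the Loewner order, which yields
\[
\mathbf{B}^{\top}\mathbf{B} = \mathbf{A}^{\top}\mathbf{Q}^{\top}\mathbf{Q}\,\mathbf{A} = \mathbf{A}^{\top}\mathbf{P}\mathbf{A} \preceq \mathbf{A}^{\top}\mathbf{A}.
\]
By Weyl's monotonicity theorem for eigenvalues of symmetric matrices ordered in the Loewner sense, $\lambda_i(\mathbf{B}^{\top}\mathbf{B}) \le \lambda_i(\mathbf{A}^{\top}\mathbf{A})$ for all $i$, and taking square roots gives $\sigma_i(\mathbf{B}) \le \sigma_i(\mathbf{A})$. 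Equivalently, one may extend $\mathbf{Q}$ to an orthogonal matrix $\widetilde{\mathbf{Q}}\in\mathbb{R}^{m\times m}$ whose first $s$ rows are $\mathbf{Q}$; then $\sigma_i(\widetilde{\mathbf{Q}}\mathbf{A}) = \sigma_i(\mathbf{A})$ because $\widetilde{\mathbf{Q}}$ is orthogonal, and $\mathbf{B}$ is a row-submatrix of $\widetilde{\mathbf{Q}}\mathbf{A}$, so the row-deletion interlacing property for singular values gives the same conclusion.

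With the singular-value comparison in hand, I would conclude as follows. The optimal rank-$r$ approximation error of any matrix equals $\sigma_{r+1}$ in the spectral norm; for the rank-revealing ULV/URV approximations actually produced, the residual satisfies $\|\mathbf{E}\|_2 = O(\sigma_{r+1})$ by \eqref{rrULV}--\eqref{rrURV}. Applying this to both $\mathbf{B}$ and $\mathbf{A}$ and using $\sigma_{r+1}(\mathbf{B}) \le \sigma_{r+1}(\mathbf{A})$ shows that the governing rank-$r$ truncation error of the ULV (or URV) decomposition of $\mathbf{B}$ is no larger than that of $\mathbf{A}$, which is the asserted statement.

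The main obstacle here is interpretive rather than computational. Because the UTV decomposition is not unique, the phrase ``rank-$r$ truncated error'' is not a single well-defined number: it depends on the particular algorithm and on the quality of the rank-revealing step, which is only controlled up to the $O(\sigma_{r+1})$ order. I would therefore make precise exactly which quantity is being compared --- either the exact best rank-$r$ error $\sigma_{r+1}$, for which the inequality $\sigma_{r+1}(\mathbf{B}) \le \sigma_{r+1}(\mathbf{A})$ is sharp, or the order-$O(\sigma_{r+1})$ bound on $\|\mathbf{E}\|_2$ --- and state the conclusion at that level, so that the deliberately loose wording ``can be smaller'' is given a rigorous meaning through the singular-value monotonicity established above.
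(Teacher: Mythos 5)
Your singular-value route is mathematically sound but genuinely different from the paper's argument, and it leaves a gap precisely at the point you flag in your last paragraph. The paper's proof is constructive: it takes the given rank-$r$ truncated ULV decomposition $\mathbf{A} = \widehat{\mathbf{U}}\widehat{\mathbf{L}}\widehat{\mathbf{V}}^{\top} + \mathbf{E}$ with $\|\mathbf{E}\|_F = \varepsilon$, left-multiplies by $\mathbf{Q}$, and performs a Gram--Schmidt (QL-type) factorization $\mathbf{Q}\widehat{\mathbf{U}} = \widetilde{\mathbf{U}}\mathbf{L}$ with $\mathbf{L}$ lower triangular, so that $\mathbf{B} = \widetilde{\mathbf{U}}(\mathbf{L}\widehat{\mathbf{L}})\widehat{\mathbf{V}}^{\top} + \mathbf{Q}\mathbf{E}$ is again a rank-$r$ truncated ULV decomposition, with residual $\|\mathbf{Q}\mathbf{E}\|_F \le \|\mathbf{E}\|_F$ since $\|\mathbf{Q}\|_2 = 1$. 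This exhibits an explicit ULV factorization of $\mathbf{B}$ whose \emph{Frobenius} truncation error is no larger than that of the given one for $\mathbf{A}$ --- and the Frobenius norm is exactly the quantity $\varepsilon_k$ that is propagated through the induction in Theorem~\ref{thm: 3.1}.

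The concrete gap in your version is that the conclusion is drawn in the spectral norm at the level of the $O(\sigma_{r+1})$ rank-revealing bounds of \eqref{rrULV}--\eqref{rrURV}. The implied constants in those bounds for $\mathbf{A}$ and for $\mathbf{B}$ need not be comparable, so $\sigma_{r+1}(\mathbf{B}) \le \sigma_{r+1}(\mathbf{A})$ does not by itself imply that the truncation error of whichever ULV decomposition is produced for $\mathbf{B}$ is at most that of the one assumed for $\mathbf{A}$. Your approach can be completed within its own framework: from $\sigma_i(\mathbf{B}) \le \sigma_i(\mathbf{A})$ for all $i$ (which you prove correctly via the Loewner comparison $\mathbf{B}^{\top}\mathbf{B} \preceq \mathbf{A}^{\top}\mathbf{A}$) deduce $\bigl(\sum_{i>r}\sigma_i(\mathbf{B})^2\bigr)^{1/2} \le \bigl(\sum_{i>r}\sigma_i(\mathbf{A})^2\bigr)^{1/2} \le \|\mathbf{E}\|_F$ by the Frobenius-norm Eckart--Young theorem, and then note that the rank-$r$ truncated SVD of $\mathbf{B}$ is itself a ULV (and URV) decomposition; this produces a decomposition of $\mathbf{B}$ with error at most $\varepsilon$, which is the existential sense in which the lemma's ``can be smaller'' is meant and used. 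What the paper's constructive route buys in addition is that the decomposition of $\mathbf{B}$ inherits the factor $\widehat{\mathbf{V}}$ and the residual $\mathbf{Q}\mathbf{E}$ directly from the decomposition of $\mathbf{A}$, so no appeal to a fresh optimal factorization is needed.
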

\begin{proof}
	Suppose a rank-$r$ truncated ULV decomposition of $\mathbf{A}$ is 
	\[ \mathbf{A} = \widehat{\mathbf{U}}\widehat{\mathbf{L}}\widehat{\mathbf{V}}^{\top} + \mathbf{E},  \]
	and the truncated error $\| \mathbf{E} \|_F = \varepsilon$, where $\widehat{\mathbf{U}}\in \mathbb{R}^{m\times r}$. Then 
	\[ \mathbf{B} = \mathbf{Q}\widehat{\mathbf{U}}\widehat{\mathbf{L}}\widehat{\mathbf{V}}^{\top} + \mathbf{Q}\mathbf{E}. \]
	Since $\mathbf{Q}$ has orthogonal rows with unit norm, $\| \mathbf{Q} \|_2 =1$, then $\| \mathbf{Q}\mathbf{E} \|_F \le \| \mathbf{E} \|_F = \varepsilon$. For $\mathbf{Q}\widehat{\mathbf{U}} \in \mathbb{R}^{s\times r}$, we can perform the Gram-Schmidt orthogonalization process on its columns to obtain a decomposition (similar to the QR decomposition) $\mathbf{Q}\widehat{\mathbf{U}} = \widetilde{\mathbf{U}}\mathbf{L}$, where $\widetilde{\mathbf{U}}\in \mathbb{R}^{s\times r}$ has orthogonal columns with unit norm and $\mathbf{L}\in \mathbb{R}^{r \times r}$ is lower-triangular. Then $\widetilde{\mathbf{L}} = \mathbf{L}\widehat{\mathbf{L}}$ is still a lower-triangular matrix. Therefore,
	\[ \mathbf{B} = \widetilde{\mathbf{U}}\widetilde{\mathbf{L}}\widehat{\mathbf{V}}^{\top} + \mathbf{Q}\mathbf{E},  \]
	the leading term $\widetilde{\mathbf{U}}\widetilde{\mathbf{L}}\widehat{\mathbf{V}}^{\top}$ is actually a rank-$r$ truncated ULV decomposition of $\mathbf{B}$, and the residual part $\| \mathbf{QE}\|_F \le \varepsilon$. Thus, it finds a rank-$r$ truncated ULV decomposition for $\mathbf{B}$ such that the approximate error is smaller than that of $\mathbf{A}$. For the URV case, the above analysis similarly holds.
\end{proof}

\begin{remark} \label{rmk: 3.1}
	According to Proposition \ref{Pro: MatrixKron}, the matrices $\left(\mathbf{I}_{I_{k+l-1}} \otimes \cdots \otimes \mathbf{I}_{I_{k+1}} \otimes \mathbf{U}^{(k)\top}\right)$ in the Eq. \eqref{eq: relations} are row-orthogonal with unit-norm rows. 
\end{remark}

Next, the error analysis for the TT-ULV algorithm~\ref{alg:TT-UTV} can be derived as shown in Theorem \ref{thm: 3.1} and Corollary \ref{cor: 3.1}, which can be seen as a generalization of the results for TT-SVD \cite{oseledets2011tensor}.

\begin{theorem}[TT-ULV case] \label{thm: 3.1}
	Given a tensor $\mathcal{A} \in \mathbb{R}^{I_1\times \cdots \times I_d}$. Assume that the rank-$r_k$ approximation error of its $k$th unfolding matrix $\mathbf{A}_k \in \mathbb{R}^{(I_1\cdots I_k)\times (I_{k+1}\cdots I_d)}$ from the ULV decomposition is $\varepsilon_k$, that is,
	\begin{equation}\label{eq: ULVcase}
		\mathbf{A}_k = \begin{bmatrix}
			\mathbf{U}^{(k)}_{1} & \mathbf{U}^{(k)}_{2}
		\end{bmatrix} \begin{bmatrix}
			\mathbf{L}^{(k)}_{11} & \mathbf{O} \\
			\mathbf{L}^{(k)}_{21} & \mathbf{L}^{(k)}_{22}
		\end{bmatrix} \begin{bmatrix}
			\mathbf{V}^{(k)\top}_{1} \\
			\mathbf{V}^{(k)\top}_{2}
		\end{bmatrix} = \mathbf{U}^{(k)}_{1}\mathbf{L}^{(k)}_{11}\mathbf{V}^{(k)\top}_{1} + \mathbf{E}^{(k)},
	\end{equation}
	where $\mathbf{L}^{(k)}_{11}\in \mathbb{R}^{r_k \times r_k}$, $\mathbf{E}^{(k)} = \mathbf{U}^{(k)}_{2}(\mathbf{L}^{(k)}_{21}\mathbf{V}^{(k)\top}_{1}+\mathbf{L}^{(k)}_{22}\mathbf{V}^{(k)\top}_{2})$ and $\| \mathbf{E}^{(k)} \|_F = \varepsilon_k$ for all $k=1,\dots, d-1$. Then the approximate tensor $\widehat{\mathcal{A}}$ with TT-ranks not higher than $\{r_k\}_0^d$, computed via the TT-ULV algorithm~\ref{alg:TT-UTV} in a left-to-right sweep, meets
	\[ \| \mathcal{A}-\widehat{\mathcal{A}} \|_F \le \sqrt{\sum_{k=1}^{d-1} \varepsilon_k^2}.\]
\end{theorem}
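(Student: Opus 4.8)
The plan is to combine two independent ingredients: a per-step (local) error bound showing that the truncation committed at iteration $k$ of Algorithm~\ref{alg:TT-UTV} is no larger than $\varepsilon_k$, and a Pythagorean (orthogonality) argument showing that these local errors accumulate in the squared Frobenius norm rather than additively. Writing $\delta_k$ for the actual rank-$r_k$ ULV truncation error incurred at step $k$, the two pieces combine to give $\| \mathcal{A}-\widehat{\mathcal{A}} \|_F^2 = \sum_{k=1}^{d-1}\delta_k^2 \le \sum_{k=1}^{d-1}\varepsilon_k^2$, which is the claimed bound.

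First I would establish the local bound, whose only subtlety is that the algorithm truncates the \emph{reduced} matrix $\mathbf{A}_2^{(k)}$ rather than the original unfolding $\mathbf{A}_k$ to which the hypothesis $\varepsilon_k$ refers. Iterating the relation \eqref{eq: relations} of Proposition~\ref{pro: 3.1} back down to $\mathcal{A}^{(1)}$ (whose $(k+1)$th unfolding coincides with $\mathbf{A}_k$, since its leading mode is a singleton), I would write $\mathbf{A}_2^{(k)} = \mathbf{Q}_k\,\mathbf{A}_k$, where $\mathbf{Q}_k$ is a product of Kronecker matrices of the shape appearing in \eqref{eq: relations}. By Remark~\ref{rmk: 3.1} each such factor is row-orthogonal with unit-norm rows, hence so is $\mathbf{Q}_k$. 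Lemma~\ref{lem: 3.1} then gives that the rank-$r_k$ ULV truncation error of $\mathbf{Q}_k\mathbf{A}_k = \mathbf{A}_2^{(k)}$ is at most that of $\mathbf{A}_k$, that is, $\delta_k \le \varepsilon_k$.

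Next I would set up the orthogonal telescoping. Setting $\mathbf{P}^{(k)} = \mathbf{U}^{(k)}\mathbf{U}^{(k)\top}$, the leading ULV term at step $k$ equals $\mathbf{P}^{(k)}\mathbf{A}_2^{(k)}$ because $\mathbf{U}^{(k)\top}\mathbf{A}_2^{(k)} = \mathbf{L}_{11}^{(k)}\mathbf{V}_1^{(k)\top}$ by \eqref{eq: 3.1}; thus each iteration is an orthogonal projection in mode-grouping coordinates. I would then define intermediate tensors $\widehat{\mathcal{A}}_0 = \mathcal{A}$ and, for $k=1,\dots,d-1$, let $\widehat{\mathcal{A}}_k$ result from applying only the $k$th projection to $\widehat{\mathcal{A}}_{k-1}$, so that $\widehat{\mathcal{A}}_{d-1} = \widehat{\mathcal{A}}$ and $\mathcal{A}-\widehat{\mathcal{A}} = \sum_{k=1}^{d-1}(\widehat{\mathcal{A}}_{k-1}-\widehat{\mathcal{A}}_k)$. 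Since the first $k-1$ projections are orthonormal embeddings, lifting the step-$k$ residual back to the full space preserves its norm, so $\| \widehat{\mathcal{A}}_{k-1}-\widehat{\mathcal{A}}_k \|_F = \| (\mathbf{I}-\mathbf{P}^{(k)})\mathbf{A}_2^{(k)} \|_F = \delta_k$. If the increments $\widehat{\mathcal{A}}_{i-1}-\widehat{\mathcal{A}}_i$ are mutually orthogonal in the Frobenius inner product, the Pythagorean theorem yields $\| \mathcal{A}-\widehat{\mathcal{A}} \|_F^2 = \sum_k \delta_k^2$, and the local bound finishes the argument.

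The main obstacle I anticipate is precisely this mutual orthogonality of the increments. It rests on lifting each reduced projector $\mathbf{P}^{(k)}$ to an orthogonal projector $\Pi_k$ on the original tensor, associated with the nested index groupings $\{1,\dots,k\}$, and verifying that these lifted projectors commute with nested ranges $\mathrm{range}(\Pi_j)\subseteq\mathrm{range}(\Pi_i)$ for $j>i$. Granting this, the $i$th increment equals $(\mathbf{I}-\Pi_i)\Pi_{i-1}\cdots\Pi_1\mathcal{A}\in\mathrm{range}(\mathbf{I}-\Pi_i)$, while for $j>i$ the $j$th increment stays in $\mathrm{range}(\Pi_i)$ because $\Pi_i$ absorbs into the product and $\mathbf{I}-\Pi_j$ preserves $\mathrm{range}(\Pi_i)$; the orthogonality of $\mathrm{range}(\Pi_i)$ and $\mathrm{range}(\mathbf{I}-\Pi_i)$ then forces the inner product to vanish. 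Making this lifting and the nesting precise — essentially the tensor-network bookkeeping underlying the TT-SVD error estimate of \cite{oseledets2011tensor} — is the technical heart of the proof; once it is in place, the remaining steps are routine.
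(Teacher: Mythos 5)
Your proposal is correct and rests on the same two ingredients as the paper's own proof: Proposition~\ref{pro: 3.1}, Remark~\ref{rmk: 3.1} and Lemma~\ref{lem: 3.1} to transfer the hypothesis on $\mathbf{A}_k$ to the reduced matrix $\mathbf{A}_2^{(k)}$ and bound each local truncation error by $\varepsilon_k$, together with the orthogonality $\mathbf{U}^{(k)\top}\mathbf{E}=\mathbf{O}$ to make the local errors accumulate in the squared Frobenius norm. The only difference is organizational: you unroll the accumulation as a telescoping sum of mutually orthogonal increments (yielding the equality $\|\mathcal{A}-\widehat{\mathcal{A}}\|_F^2=\sum_k\delta_k^2$ before bounding $\delta_k\le\varepsilon_k$), whereas the paper peels off the first core, splits the error once via this orthogonality, and closes the argument by induction on the tensor order --- the two formulations are equivalent.
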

\begin{proof}
	In the TT-ULV algorithm~\ref{alg:TT-UTV}, it yields the first TT-core from the ULV decomposition of the first unfolding matrix
	\[\mathbf{A}_1 = \mathbf{U}_{1}^{(1)}\mathbf{A}_2^{(2)} + \mathbf{E}^{(1)}, \]
	where $ \mathbf{U}^{(1)}_{1} \in \mathbb{R}^{I_1 \times r_1}$ leads to the first TT-core, and $\mathbf{A}_2^{(2)} = \mathbf{L}^{(1)}_{11}\mathbf{V}^{(1)\top}_{1} \in \mathbb{R}^{r_1 \times (I_2\cdots I_d)}$ is used to produce the subsequent TT-cores. Note that $\mathbf{A}_2^{(2)}$ is the second unfolding matrix of the tensor $\mathcal{A}^{(2)}$. Based on the column orthogonality, it follows that $\mathbf{U}_1^{(1)\top} \mathbf{E}^{(1)} = \mathbf{O}$. As we can see, the subsequent processes are to approximate the $(d-1)$th-order tensor $\widetilde{\mathcal{A}}^{(2)} = \mathrm{reshape}(\mathbf{A}_2^{(2)}, [r_1I_2, I_3, \cdots, I_d])$ folded from the matrix $\mathbf{A}_2^{(2)}$. We denote by $\mathcal{B}\in \mathbb{R}^{r_1I_2 \times I_3\times \cdots \times I_d}$ the approximation of $\widetilde{\mathcal{A}}^{(2)}$ computed by the algorithm, and $\mathbf{B} = \mathrm{reshape}(\mathcal{B}, [r_1, (I_2\cdots I_d)])$. Then, the approximate error of the output tensor $\widehat{\mathcal{A}}$ of the TT-ULV algorithm is
	\begin{align} \label{Eq: err}
		\begin{split}
			\| \mathcal{A}-\widehat{\mathcal{A}} \|_F^2 & = \| \mathbf{A}_1 - \mathbf{U}_1^{(1)}\mathbf{B} \|_F^2 \\
			& = \| \mathbf{A}_1 - \mathbf{U}_1^{(1)}\mathbf{A}_2^{(2)} + \mathbf{U}_1^{(1)}\mathbf{A}_2^{(2)} -  \mathbf{U}_1^{(1)}\mathbf{B} \|_F^2 \\
			& \le \| \mathbf{E}^{(1)} \|_F^2 + \| \mathbf{A}_2^{(2)} - \mathbf{B} \|_F^2 \\
			& = \varepsilon_1^2 + \| \widetilde{\mathcal{A}}^{(2)} - \mathcal{B} \|_F^2,
		\end{split}
	\end{align}
	where the third inequality is derived from the orthogonality of $\mathbf{U}_1^{(1)\top}\mathbf{E}^{(1)}=\mathbf{O}$ and $\| \mathbf{U}_1^{(1)} \|_2 = 1$. Therefore, it reduces to the estimation of the approximate error for the $(d-1)$th-order tensor $\widetilde{\mathcal{A}}^{(2)}$ by TT-ULV algorithm. As explained in Proposition \ref{pro: 3.1}, Lemma \ref{lem: 3.1}, and Remark \ref{rmk: 3.1}, the truncated error of rank-$r_k$ approximation from the ULV decomposition of the $(k-1)$th unfolding matrix of $\widetilde{\mathcal{A}}^{(2)}$ can be smaller than the approximate error $\varepsilon_k$ of the rank-$r_k$ truncated ULV decomposition for $\mathbf{A}_k$, for all $k=1,\dots,d-1$. Hence, based on the above analysis and by the method of mathematical induction, we get

	\[ \| \widetilde{\mathcal{A}}_2 - \mathcal{B} \|_F^2 \le \sum_{k=2}^{d-1} \varepsilon_k^2. \]
	Combine it with the Eq. \eqref{Eq: err}, the theorem is proved.
\end{proof}

As established by Theorem \ref{thm: 3.1}, the approximation error of the TT-ULV algorithm is bounded by the low-rank approximation errors of the ULV decompositions applied to its unfolding matrices. This result also reveals that the low-rankness of these unfolding matrices directly implies a low-rank TT structure. Meanwhile, the above error analysis also illustrates how the final approximation error of the TT-UTV algorithm~\ref{alg:TT-UTV} propagates from the local errors incurred at each step of the sequential truncated UTV approximations.


\begin{corollary} \label{cor: 3.1}
	For the TT-ULV algorithm~\ref{alg:TT-UTV} in the left-to-right sweep, if the error of the $k$th truncated ULV decomposition in step 4 is $\varepsilon_k$, then the total error of the computed TT-approximation tensor $\widehat{\mathcal{A}}$ satisfies
	\[ \| \mathcal{A}-\widehat{\mathcal{A}} \|_F \le \sqrt{\sum_{k=1}^{d-1} \varepsilon_k^2}.\]
\end{corollary}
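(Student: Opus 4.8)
The plan is to reuse the recursive Pythagorean-type decomposition already established in the proof of Theorem \ref{thm: 3.1}, but to track the \emph{actual} truncation errors produced at step 4 of each iteration rather than bounding them through the original unfolding matrices. The essential simplification is that, since the corollary \emph{defines} $\varepsilon_k$ to be the residual norm of the truncated ULV decomposition performed at iteration $k$, the intermediate estimate supplied by Lemma \ref{lem: 3.1} is no longer needed; the local errors enter the bound directly.

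First I would peel off the first iteration. At iteration $1$ we have $r_0 = 1$, so the temporary matrix coincides with the first unfolding $\mathbf{A}_1$, and step 4 yields $\mathbf{A}_1 = \mathbf{U}_1^{(1)}\mathbf{A}_2^{(2)} + \mathbf{E}^{(1)}$ with $\| \mathbf{E}^{(1)} \|_F = \varepsilon_1$ by definition. Because $\mathbf{U}_1^{(1)}$ has orthonormal columns and $\mathbf{U}_1^{(1)\top}\mathbf{E}^{(1)} = \mathbf{O}$, the same manipulation as in Eq. \eqref{Eq: err} gives
\[ \| \mathcal{A}-\widehat{\mathcal{A}} \|_F^2 \le \varepsilon_1^2 + \| \widetilde{\mathcal{A}}^{(2)} - \mathcal{B} \|_F^2, \]
where $\mathcal{B}$ is the TT-approximation of the reduced $(d-1)$th-order tensor $\widetilde{\mathcal{A}}^{(2)}$ produced by the remaining iterations.

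Next I would set up an induction on the tensor order $d$. The key observation is that iterations $2,\dots,d-1$ of the algorithm run on $\mathcal{A}$ are precisely the TT-ULV algorithm applied afresh to $\widetilde{\mathcal{A}}^{(2)}$; under this relabeling the step-4 residual of the $j$th iteration on $\widetilde{\mathcal{A}}^{(2)}$ is exactly $\varepsilon_{j+1}$. The base case is a matrix ($d=2$), for which a single truncated ULV decomposition gives $\| \mathcal{A}-\widehat{\mathcal{A}} \|_F = \varepsilon_1$ with equality. Invoking the induction hypothesis on the $(d-1)$th-order tensor yields $\| \widetilde{\mathcal{A}}^{(2)} - \mathcal{B} \|_F^2 \le \sum_{k=2}^{d-1}\varepsilon_k^2$, and substituting into the displayed inequality closes the argument.

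I expect the main obstacle to be a bookkeeping one rather than an analytic one: verifying that the Pythagorean split is available at \emph{every} iteration, which requires the column-orthogonality $\mathbf{U}_1^{(k)\top}\mathbf{E}^{(k)} = \mathbf{O}$ to hold at each step, and checking that the reindexing $j \mapsto j+1$ of the local errors under the induction is consistent. Unlike Theorem \ref{thm: 3.1}, no appeal to Proposition \ref{pro: 3.1}, Lemma \ref{lem: 3.1}, or Remark \ref{rmk: 3.1} is needed here, since those results served only to relate the local step-4 errors to the errors on the original unfolding matrices, a relation we now bypass by taking $\varepsilon_k$ to be the local error itself.
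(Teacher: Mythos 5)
Your proposal is correct and follows essentially the same route as the paper, which derives Corollary~\ref{cor: 3.1} directly from the recursive orthogonal-split argument in the proof of Theorem~\ref{thm: 3.1}: the inequality $\| \mathcal{A}-\widehat{\mathcal{A}} \|_F^2 \le \varepsilon_1^2 + \| \widetilde{\mathcal{A}}^{(2)} - \mathcal{B} \|_F^2$ followed by induction on the order, with the column-orthogonality $\mathbf{U}_1^{(k)\top}\mathbf{E}^{(k)} = \mathbf{O}$ supplying the Pythagorean split at every step. You are also right that Proposition~\ref{pro: 3.1}, Lemma~\ref{lem: 3.1}, and Remark~\ref{rmk: 3.1} are needed only to relate the local residuals to the original unfolding matrices $\mathbf{A}_k$ in Theorem~\ref{thm: 3.1}, and become superfluous once $\varepsilon_k$ is taken to be the local step-4 error itself.
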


\begin{algorithm}[!htpb]
	\renewcommand{\algorithmicrequire}{\textbf{Input:}}
	\renewcommand{\algorithmicensure}{\textbf{Output:}}
	\caption{TT-ULV algorithm for prescribed accuracy (left-to-right sweep)}
	\label{alg:TT-UTVver2}
	\centering
	\begin{algorithmic}[1]
		\REQUIRE A $d$th-order tensor $\mathcal{A}$, and prescribed relative error tolerance $\varepsilon$, and the error weights $\{w_k\}_1^{d-1}$ with $\sum_{k=1}^{d-1} w_k^2 = 1$.
		\ENSURE Left-orthogonal TT-cores $\mathcal{G}^{(1)}, \dots, \mathcal{G}^{(d)}$ of the approximation tensor $\widehat{\mathcal{A}}$ with TT-ranks $r_k$ not higher than the ULV truncated ranks of the unfolding matrices $\mathbf{A}_k$ such that the truncated error $\varepsilon_k=w_k \varepsilon \|\mathcal{A}\|_F$. Then from Corollary \ref{cor: 3.1}, the computed TT-approximation tensor satisfies
		\[\|\mathcal{A}-\widehat{\mathcal{A}}\|_F \leq \varepsilon\|\mathcal{A}\|_F.\]
		
		\STATE  Compute truncation parameters $\varepsilon_k=w_k \varepsilon \|\mathcal{A}\|_F$, set $r_0 =1$.
		\STATE Temporary matrix: $\mathbf{C}=\mathrm{reshape}(\mathcal{A}, [I_1, I_2 \cdots I_d])$.
		\STATE for $k=1$ to $d-1$ do
		\STATE \quad $\mathbf{C}:=\mathrm{reshape}\left(\mathbf{C}, \left[r_{k-1} I_k,  I_{k+1}\cdots I_d\right]\right)$.
		\STATE \quad Compute truncated ULV decomposition $\mathbf{C}=\mathbf{U}_1 \mathbf{L}_{11}\mathbf{V}_1^{\top}+\mathbf{E}$, and determine the rank $r_k$ such that $\mathbf{L}_{11}\in \mathbb{R}^{r_k\times r_k}$ and $\|\mathbf{E}\|_F \leq \varepsilon_k$. 
		\STATE \quad New core: $\mathcal{G}^{(k)}:=\operatorname{reshape}\left(\mathbf{U}_1, \left[r_{k-1}, I_k, r_k\right]\right)$.
		\STATE \quad $\mathbf{C}:=\mathbf{L}_{11}\mathbf{V}_1^{\top}$.
		\STATE end for
		\STATE $\mathcal{G}^{(d)}=\mathbf{C}$.
	\end{algorithmic}  
\end{algorithm} 

Therefore, we also present the version of TT-ULV algorithm for prescribed accuracy in Algorithm~\ref{alg:TT-UTVver2} that can adaptively determine the TT-ranks.  In this algorithm, if equal error weights are used, then $w_k = \frac{1}{\sqrt{d-1}}$.  It is worth noting that in Step 5, we need to determine the smallest rank $r_k$ by comparing the truncated error of the ULV decomposition with the given tolerance $\varepsilon_k$. In the ULV desomposition $\mathbf{C} = \mathbf{U}_1\mathbf{L}_{11}\mathbf{V}_1^{\top} + \mathbf{E}$, the orthgonolity of $\mathbf{U}_1^{\top}\mathbf{E} = \mathbf{O}$ implies that $\| \mathbf{E} \|_F^2 = \| \mathbf{C} \|_F^2 - \| \mathbf{L}_{11} \|_F^2$. Thereby, we only need to compute the Frobenius norms of $\mathbf{C}$ and the row vectors of $\mathbf{L}_{11}$. Then the truncated rank $r_k$ can be determined as the smallest integer for which $\| \mathbf{L}_{11} \|_F^2  \ge \| \mathbf{C} \|_F^2 - \varepsilon_k^2$. If $\mathbf{C}$ is of size $m\times n$, then this process has a computational complexity of $O(mn)$, which is substantially lower than that of the preceding ULV decomposition.

As we can see, an essential requisite for the above error analysis for Algorithm~\ref{alg:TT-UTV} is that in each iteration the column-orthogonal matrix $\mathbf{U}_1$ used to generate the TT-core is orthogonal to the residual part $\mathbf{E}$ of the truncated ULV decomposition. Corollary \ref{cor: 3.1} theoretically controls the accumulation of the truncated errors by a sharp bound. 

However, if the right-orthogonal TT-cores are needed, the algorithm should adopt a right-to-left sweep, as given in Algorithm~\ref{alg:TT-ULV2}. Please refer to \ref{sec: appendix}, where we explain that this algorithm is not recommended for computing the right-orthogonal cores in the right-to-left sweep, especially for the objective of computing a fixed-precision TT decomposition. Interestingly, the URV case is precisely the opposite, which is more suitable to compute the TT format in a right-to-left sweep. 

\subsection{TT-URV case} \label{sec: 3.2}
Recall the URV decomposition \eqref{eq: urv}, 
$\mathbf{A} = \mathbf{U}_1\mathbf{R}_{11}\mathbf{V}_1^{\top}+(\mathbf{U}_1\mathbf{R}_{12}+\mathbf{U}_2\mathbf{R}_{22})\mathbf{V}_2^{\top}$, the matrix $\mathbf{V}_1$ is orthogonal to the residual part $(\mathbf{U}_1\mathbf{R}_{12}+\mathbf{U}_2\mathbf{R}_{22})\mathbf{V}_2^{\top}$. Wherefore, using the URV decomposition to compute the TT format is suitable for the right-to-left sweep. 

We formulate the TT-URV in Algorithm~\ref{alg:TT-URV} for fixed TT-ranks to replace Algorithm~\ref{alg:TT-ULV2} in a right-to-left sweep. The results similar to Proposition \ref{pro: 3.1} and Theorem \ref{thm: 3.1} also hold for Algorithm~\ref{alg:TT-URV}. We omit the proofs.


\begin{algorithm}[!htpb]
	\renewcommand{\algorithmicrequire}{\textbf{Input:}}
	\renewcommand{\algorithmicensure}{\textbf{Output:}}
	\caption{TT-URV algorithm for fixed TT-ranks (right-to-left sweep)}
	\label{alg:TT-URV}
	\centering
	\begin{algorithmic}[1]
		\REQUIRE A $d$th-order tensor $\mathcal{A} \in \mathbb{R}^{I_1 \times \cdots \times I_d}$, and fixed TT-ranks $\{ r_1, \dots, r_{d-1} \}$, $r_0=r_d=1$. 
		\ENSURE Right-orthogonal TT-cores $\mathcal{G}^{(1)}, \dots, \mathcal{G}^{(d)}$ of the approximation $\widehat{\mathcal{A}}$ with TT-ranks $r_k$'s.
		
		\STATE Temporary matrix: $\mathbf{C}=\mathrm{reshape}(\mathcal{A}, \left[ I_1\cdots I_{d-1}, I_d \right])$, i.e., the $(d-1)$th unfolding matrix $\mathbf{A}_{d-1}$.
		\STATE for $k=d$ to $2$ do
		\STATE \quad  $\mathbf{C}:=\operatorname{reshape}\left(\mathbf{C},\left[I_1\cdots I_{k-1}, I_{k}r_{k}\right]\right)$.
		\STATE \quad Compute the truncated rank-$r_{k-1}$ approximation from URV decomposition $\mathbf{C}=\mathbf{U}_1 \mathbf{R}_{11} \mathbf{V}_{11}^{\top}+\mathbf{E}$, where $\mathbf{U}_1 \mathbf{R}_{11} \mathbf{V}_1^{\top}$ with $\mathbf{R}_{11}\in \mathbb{R}^{r_{k-1}\times r_{k-1}}$ is the leading term, and $\mathbf{E}$ denotes the residual
		part in the URV decomposition \eqref{eq: urv}.
		\STATE \quad New core: $\mathcal{G}^{(k)}=\operatorname{reshape}\left(\mathbf{V}_1^{\top},\left[r_{k-1}, I_k, r_k\right]\right)$.
		\STATE \quad  $\mathbf{C}= \mathbf{U}_1 \mathbf{R}_{11}$.
		\STATE end for
		\STATE $\mathcal{G}^{(1)}=\mathbf{C}$.
	\end{algorithmic}  
\end{algorithm}  


\begin{theorem}[TT-URV case]  \label{thm: 3.2}
	Given a tensor $\mathcal{A} \in \mathbb{R}^{I_1\times \cdots \times I_d}$. Assume that the rank-$r_k$ approximation error of its $k$th unfolding matrix $\mathbf{A}_k \in \mathbb{R}^{(I_1\cdots I_k)\times (I_{k+1}\cdots I_d)}$ from the URV decomposition is $\varepsilon_k$, that is,
	\begin{equation}\label{eq: URVcase}
		\mathbf{A}_k = \begin{bmatrix}
			\mathbf{U}^{(k)}_{1} & \mathbf{U}^{(k)}_{2}
		\end{bmatrix} \begin{bmatrix}
			\mathbf{R}^{(k)}_{11} & \mathbf{R}^{(k)}_{12} \\
			\mathbf{O} & \mathbf{R}^{(k)}_{22}
		\end{bmatrix} \begin{bmatrix}
			\mathbf{V}^{(k)\top}_{1} \\
			\mathbf{V}^{(k)\top}_{2}
		\end{bmatrix} = \mathbf{U}^{(k)}_{1}\mathbf{R}^{(k)}_{11}\mathbf{V}^{(k)\top}_{1} + \mathbf{E}^{(k)},
	\end{equation}
	where $\mathbf{R}^{(k)}_{11}\in \mathbb{R}^{r_k \times r_k}$, $\mathbf{E}^{(k)} = (\mathbf{U}^{(k)}_{1}\mathbf{R}^{(k)}_{12} +\mathbf{U}^{(k)}_{2}\mathbf{R}^{(k)}_{22})\mathbf{V}^{(k)\top}_{2}$, and $\| \mathbf{E}^{(k)} \| = \varepsilon_k$ are defined for all $k=1,\dots, d-1$. Then the approximate tensor $\widehat{\mathcal{A}}$ with TT-ranks not higher than $\{r_k\}_0^d$, computed via the TT-URV algorithm~\ref{alg:TT-URV} in the right-to-left sweep, meets
	\[ \| \mathcal{A}-\widehat{\mathcal{A}} \|_F \le \sqrt{\sum_{k=1}^{d-1} \varepsilon_k^2}.\]
\end{theorem}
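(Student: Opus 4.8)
The plan is to mirror the proof of Theorem \ref{thm: 3.1}, interchanging the roles of the left and right orthogonal factors and reversing the sweep direction. The structural fact that makes URV suited to a right-to-left sweep is the mirror image of the one exploited in the ULV case: writing the rank-$r_{k-1}$ truncated URV decomposition of an unfolding as $\mathbf{A}_{k-1} = \mathbf{U}^{(k-1)}_1\mathbf{R}^{(k-1)}_{11}\mathbf{V}^{(k-1)\top}_1 + \mathbf{E}^{(k-1)}$ with $\mathbf{E}^{(k-1)} = (\mathbf{U}^{(k-1)}_1\mathbf{R}^{(k-1)}_{12} + \mathbf{U}^{(k-1)}_2\mathbf{R}^{(k-1)}_{22})\mathbf{V}^{(k-1)\top}_2$, the relation $\mathbf{V}^{(k-1)\top}_2\mathbf{V}^{(k-1)}_1 = \mathbf{O}$ gives $\mathbf{E}^{(k-1)}\mathbf{V}^{(k-1)}_1 = \mathbf{O}$. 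Thus the residual is orthogonal \emph{on the right} to the row space carried by the core $\mathcal{G}^{(k)}$, which Algorithm \ref{alg:TT-URV} builds from $\mathbf{V}^{(k-1)\top}_1$; this is the exact counterpart of $\mathbf{U}^{(k)\top}_1\mathbf{E}^{(k)} = \mathbf{O}$ in the ULV case.

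First I would peel off the last core. In the opening iteration ($k=d$) the algorithm decomposes $\mathbf{A}_{d-1}$, takes $\mathcal{G}^{(d)}$ from $\mathbf{V}^{(d-1)\top}_1$, and carries $\mathbf{U}^{(d-1)}_1\mathbf{R}^{(d-1)}_{11}$ forward. Let $\mathbf{B}$ denote the matrix that the remaining iterations produce as an approximation of this carried factor, so that the $(d-1)$th unfolding of $\widehat{\mathcal{A}}$ equals $\mathbf{B}\mathbf{V}^{(d-1)\top}_1$. Inserting $\pm\,\mathbf{U}^{(d-1)}_1\mathbf{R}^{(d-1)}_{11}\mathbf{V}^{(d-1)\top}_1$ gives
\[ \|\mathcal{A}-\widehat{\mathcal{A}}\|_F^2 = \|\mathbf{E}^{(d-1)} + (\mathbf{U}^{(d-1)}_1\mathbf{R}^{(d-1)}_{11} - \mathbf{B})\mathbf{V}^{(d-1)\top}_1\|_F^2. \]
Because $\mathbf{E}^{(d-1)}\mathbf{V}^{(d-1)}_1 = \mathbf{O}$, the cross term $\mathrm{tr}(\mathbf{V}^{(d-1)\top}_1\mathbf{E}^{(d-1)\top}(\mathbf{U}^{(d-1)}_1\mathbf{R}^{(d-1)}_{11} - \mathbf{B}))$ vanishes, and since $\mathbf{V}^{(d-1)}_1$ has orthonormal columns, right-multiplication by $\mathbf{V}^{(d-1)\top}_1$ preserves the Frobenius norm; hence $\|\mathcal{A}-\widehat{\mathcal{A}}\|_F^2 = \varepsilon_{d-1}^2 + \|\mathbf{U}^{(d-1)}_1\mathbf{R}^{(d-1)}_{11} - \mathbf{B}\|_F^2$. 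This is the right-to-left analogue of Eq. \eqref{Eq: err}, and it reduces the claim to estimating the approximation error for the $(d-1)$th-order tensor folded from $\mathbf{U}^{(d-1)}_1\mathbf{R}^{(d-1)}_{11}$.

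The remainder is an induction in the spirit of Theorem \ref{thm: 3.1}. I would establish the right-multiplication analogue of Proposition \ref{pro: 3.1}: the projection $\mathbf{A}_{k-1}\mathbf{V}^{(k-1)}_1 = \mathbf{U}^{(k-1)}_1\mathbf{R}^{(k-1)}_{11}$ shows that the unfoldings of successive carried tensors are related by right-multiplication with Kronecker factors of the form $\mathbf{V}^{(k-1)} \otimes \mathbf{I}_{I_{k-2}} \otimes \cdots \otimes \mathbf{I}_{I_1}$, which (as in Remark \ref{rmk: 3.1}) have orthonormal columns. Lemma \ref{lem: 3.1}, read in transposed form (right-multiplication by a column-orthonormal factor becomes left-multiplication by a row-orthogonal matrix on the transpose, and a rank-$r$ URV truncation of a matrix is a rank-$r$ ULV truncation of its transpose), then guarantees that the rank-$r_{k-1}$ URV truncation error actually incurred on each carried-forward matrix does not exceed the error $\varepsilon_{k-1}$ of the rank-$r_{k-1}$ URV truncation of the original unfolding $\mathbf{A}_{k-1}$. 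Feeding these bounds into the recursion yields $\|\mathbf{U}^{(d-1)}_1\mathbf{R}^{(d-1)}_{11} - \mathbf{B}\|_F^2 \le \sum_{k=1}^{d-2}\varepsilon_k^2$, and combining with the first-step identity proves $\|\mathcal{A}-\widehat{\mathcal{A}}\|_F^2 \le \sum_{k=1}^{d-1}\varepsilon_k^2$.

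I expect the main obstacle to be exactly this bridging step: showing that the truncation errors the algorithm sees on the carried-forward matrices are controlled by the $\varepsilon_k$ attached to the \emph{original} unfoldings $\mathbf{A}_k$. The subtlety is that, unlike the ULV sweep where the reducing factor acts on the left, here the Kronecker structure and the projection act on the right, so one must track the reverse lexicographic ordering of the trailing indices carefully when writing the unfolding recursion. A clean alternative that sidesteps much of this bookkeeping is to note that a URV decomposition of $\mathbf{A}_k$ is a ULV decomposition of $\mathbf{A}_k^{\top}$, so reversing the mode order of $\mathcal{A}$ converts the right-to-left TT-URV sweep into a left-to-right TT-ULV sweep and lets Theorem \ref{thm: 3.1} apply verbatim; the remaining cost is then only to verify that mode reversal maps the relevant unfoldings and their truncation errors correctly.
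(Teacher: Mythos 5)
Your proposal is correct and follows essentially the same route as the paper, which in fact omits an explicit proof of Theorem~\ref{thm: 3.2} and simply asserts that results analogous to Proposition~\ref{pro: 3.1} and Theorem~\ref{thm: 3.1} hold for Algorithm~\ref{alg:TT-URV}; your write-up is precisely that analogy made explicit, with the key mirror identity $\mathbf{E}^{(k)}\mathbf{V}^{(k)}_1=\mathbf{O}$ replacing $\mathbf{U}^{(k)\top}_1\mathbf{E}^{(k)}=\mathbf{O}$ and a transposed reading of Lemma~\ref{lem: 3.1} (whose statement already covers the URV case). The mode-reversal shortcut you mention at the end is also consistent with the paper's own remark following Theorem~\ref{thm: 3.2}.
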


The TT-URV algorithm for prescribed accuracy can be obtained analogously, referring to Algorithms \ref{alg:TT-UTVver2} and \ref{alg:TT-URV}. It is not repeated here.

		

Based on the above theoretical analysis, we recommend the TT-ULV algorithm in a left-to-right sweep to compute the left-orthogonal cores and the TT-URV algorithm in a right-to-left sweep to compute the right-orthogonal cores.

\begin{remark}
	In practice, both the ULV and URV decompositions can be flexibly adapted for computing the TT decomposition with left-orthogonal or right-orthogonal TT-cores. Taking TT-URV as an example, while our theoretical analysis indicates that it is better suited for a right-to-left update pattern, one can still use it to compute left-orthogonal TT cores for a given tensor. This is achieved by simply reversing the index order  of the input tensor and applying the TT-URV algorithm with a right-to-left sweep to update the computation on the reversed tensor. Finally, reverse the index order of the resulting TT-cores.
\end{remark}

\subsection{Computational cost}

The computational procedure for the SVD typically consists of two main stages: an initial reduction of the matrix to bidiagonal form, which is non-iterative, followed by an iterative QR phase. For an
$m\times n$ matrix with $m>n$, the approximate dominant term of the flop count for the SVD is $6mn^2$ \cite{golub2013matrix}. In contrast, the UTV decomposition impose fewer structural constraints on middle factors than SVD, leading to lower computational overhead. Algorithms for UTV decomposition comprise a family of related methods, including the low-rank revealing UTV decompositions \cite{Fierro1997low}, the USV-plus decomposition \cite{Lee2018rankrev}, etc. The revealing low-rank UTV algorithms have two modes: cold-start and warm-start. The warm-started algorithms assume that a good initial guess of a previous decomposition such as QR decomposition is available, they are designed to refine or update the decomposition efficiently via Givens rotations; The cold-started algorithms start from scratch without any initial guess, which build the low-rank UTV decomposition directly from the original matrix iteratively. The USV-plus is also executed in an iterative manner, involving power or Lanczos iterations. Let $k$ denote the desired rank for the truncated decompositions. According to \cite[Table 2]{Fierro1997low}, the approximate dominant term of the flop count for cold-started UTV algorithms is $4mnkp+12mnk$, where $p$ denotes the average number of power or Lanczos iterations per deflation step, typically less than $5$. For the USV-plus decomposition, the complexity is dominated by $4mn(k+1)q$, with $q$ representing the average number of power iterations required for per singular value estimate. Based on these expressions, we derive the corresponding complexity estimates for the flop counts of the proposed TT decomposition algorithms.

In both the TT-SVD and TT-UTV algorithms, the predominant computational cost in each iteration arises from low-rank matrix approximation, implemented through either SVD or the more general UTV decompositions. For a $d$th-order tensor $\mathcal{A}\in \mathbb{R}^{I_1\times I_2\times \cdots \times I_d}$, the computational process needs to sequentially decompose $d-1$ matrices of size $I_1\times (I_2\cdots I_d)$, $r_1I_2\times (I_3\cdots I_d)$, $\ldots$, and $r_{d-2}I_{d-1}\times I_d$. To facilitate the comparison of computational complexity orders, we  assume that all dimensions of the input tensor are equal to $I$ and that all TT-ranks are taken as $r$. Then, the dominant term of the computational complexity for TT-SVD is
\[ 6(I^{d+1}+r^2I^d+r^2I^{d-1}+\cdots + r^2I^4 + rI^3). \]
For the TT-UTV algorithms based on cold-started low-rank UTV decompositions, the dominant term is
\[ (12+4p)(rI^d+r^2I^{d-1}+r^2I^{d-2}+\cdots +r^2I^2), \]
and for the TT-ULV algorithm based on the USV-plus decomposition, it is
\[4q((r+1)I^d+r(r+1)I^{d-1}+r(r+1)I^{d-2}+\cdots +r(r+1)I^2).\]
As observed, the latter two terms are lower by an order of magnitude relative to that of TT-SVD. This computational advantage becomes particularly evident when $r$ is substantially smaller than 
$I$. In addition, there are many other choices of UTV algorithms that are more computationally efficient than SVD. For instance, the randUTV algorithm can make better use of fast BLAS routines and offers greater flexibility for parallel processing, leading to comparatively faster execution in practical applications \cite{Martinsson2019rand}.




\section{Experiments}
\label{sec: 4}

In this section, we demonstrate the favorable performance of the TT-UTV algorithms. The warm-started and cold-started versions for low-rank UTV decomposition \cite{Fierro1997low} are integrated into the MATLAB-based toolbox `UTVtools' \cite{fierro1999utv}. In this toolbox, the warm-started algorithms for ULV and URV decompositions are provided via the functions `lulv' and `lurv', respectively, while the cold-started versions are implemented through `lulv\_a' and `lurv\_a'. Another notable approach is the USV-plus decomposition \cite{Lee2018rankrev}, a rank-revealing ULV decomposition that iteratively estimates the singular values and singular vectors of a matrix one by one. The iteration terminates when the most recently estimated singular value falls below a prescribed threshold, thereby yielding the numerical rank and an approximate column space. To adapt the USV-plus algorithm for TT decomposition, we adopt its basic computational procedure, with a slight modification to the stopping criterion to accommodate both fixed-rank and fixed-accuracy TT-ULV algorithmic frameworks.

The experiments were conducted on a computer equipped with an Apple M4 chip (featuring a 10-core CPU) and 24 GB of RAM, using MATLAB R2025a. As the benchmark, the TT-SVD algorithm is executed using the highly optimized built-in command svd(A, `econ') to ensure computational efficiency, that is, an economy-size SVD is computed for a matrix $\mathbf{A} \in \mathbb{R}^{m \times n}$, which computes $\mathbf{A} =\mathbf{U}\mathbf{S}\mathbf{V}^{\top} $ such that $\mathbf{U} \in \mathbb{R}^{m \times n}, \mathbf{S} \in \mathbb{R}^{n \times n}$, and $\mathbf{V} \in \mathbb{R}^{n \times n}$, if $m \geq n$. and the recently proposed randomized TT-SVD (RandTT-SVD) algorithm \cite{che2026efficient} is also included for comparison.  Note that the TT-SVD and TT-ULV algorithms are executed via a left-to-right sweep. In contrast, the TT-URV algorithms are applied to the tensors with their indices reversed and proceed via a right-to-left sweep, in this case, both the ordering of the cores and the sequence of TT ranks undergo a corresponding reversal. All of them yield left-orthogonal TT-cores.

To benchmark the proposed algorithms, we evaluated their efficiency and accuracy on multiple tensors under both fixed-rank and given-precision conditions. For the tensor data $\mathcal{A}$, let $\widehat{\mathcal{A}}$ denote the tensor reconstructed from the TT formats generated by these algorithms. The reconstruction accuracy is measured by the Relative Squared Error (RSE), defined as
\[ \mathrm{RSE} = \frac{\| \widehat{\mathcal{A}}-\mathcal{A}\|_F}{\| \mathcal{A} \|_F}. \]
The computational efficiency of each algorithm is benchmarked by its runtime, measured in seconds. To mitigate the effects of randomness, the reported runtime is the average elapsed time over 10 independent executions; for randomized algorithms, the RSE and estimated TT ranks are also averaged.


\subsection{Synthetic data}

\textbf{Example 1} Similar to \cite{sun2020low}, we generate tensors $\mathcal{A}\in \mathbb{R}^{160\times 160\times 160\times 160}$ as
\[  \mathcal{A} =  \mathcal{P} + \beta \mathcal{N} \]
with a clean signal tensor $\mathcal{P}=\mathcal{G}_1 \times{ }_2^1 \mathcal{G}_2 \times{ }_3^1 \mathcal{G}_3 \times{ }_3^1 \mathcal{G}_4$, where the entries of $\mathcal{G}_1 \in \mathbb{R}^{160 \times 20}, \mathcal{G}_2, \mathcal{G}_3 \in \mathbb{R}^{20 \times 160 \times 20}$ and $\mathcal{G}_4 \in \mathbb{R}^{20 \times 160}$ are independent Gaussian variables with zero mean and unit variance, and the entries of the noise tensor  $\mathcal{N} \in \mathbb{R}^{160\times 160\times 160\times 160}$ form a family of independent  standard normal random variables. Different noise magnitudes $\beta$ are added to the tensor $\mathcal{P}$, so that the resulting tensors $\mathcal{A}$ exhibit various signal-to-noise ratio (SNR) levels, defined as
\[ \mathrm{SNR}[\mathrm{~dB}]=10 \log \left(\frac{\|\mathcal{P}\|_F^2}{\|\beta \mathcal{N}\|_F^2}\right). \]

For tensors $\mathcal{A} \in \mathbb{R}^{160 \times 160 \times 160 \times 160}$ with different SNR levels, we fix the TT-rank to $\{20, 20, 20\}$ and employ various algorithms, including TT‑SVD, RandTT‑SVD, and the proposed TT‑UTV algorithms to compute the TT approximations. The inner subroutines of TT‑UTV utilize warm‑started ULV, URV, and USV‑plus algorithms. Because the intermediate steps of the cold-start low-rank UTV algorithm generate excessively large arrays, its application to large-scale tensor computations leads to out-of-memory errors on our devices. Therefore, we do not report the performance of this algorithm in the following sections. Following the standard implementations in the UTV toolbox by Fierro and Hansen, the number of Lanczos iterations $p$ in each refinement is set not higher than $5$. The number of power iteration $q$ of USV-plus is totally determined by the computational accuracy.

As can be seen from Fig. \ref{fig: ARSEs}, the TT approximations obtained by these algorithms achieve very similar accuracy, with RandTT‑SVD exhibiting slight fluctuations. For computational time, RandTT‑SVD is the fastest, followed by TT‑UTV (USV‑plus). The TT‑UTV algorithms based on low‑rank UTV decomposition is the slowest. It is worth noting that, although TT‑UTV has a lower theoretical computational complexity than TT‑SVD, its practical performance is slower, possibly because its implementation does not leverage higher-level linear algebra subroutines, e.g., BLAS3.
\begin{figure}[htbp!]
	\centering
	\includegraphics[width = 0.98\textwidth]{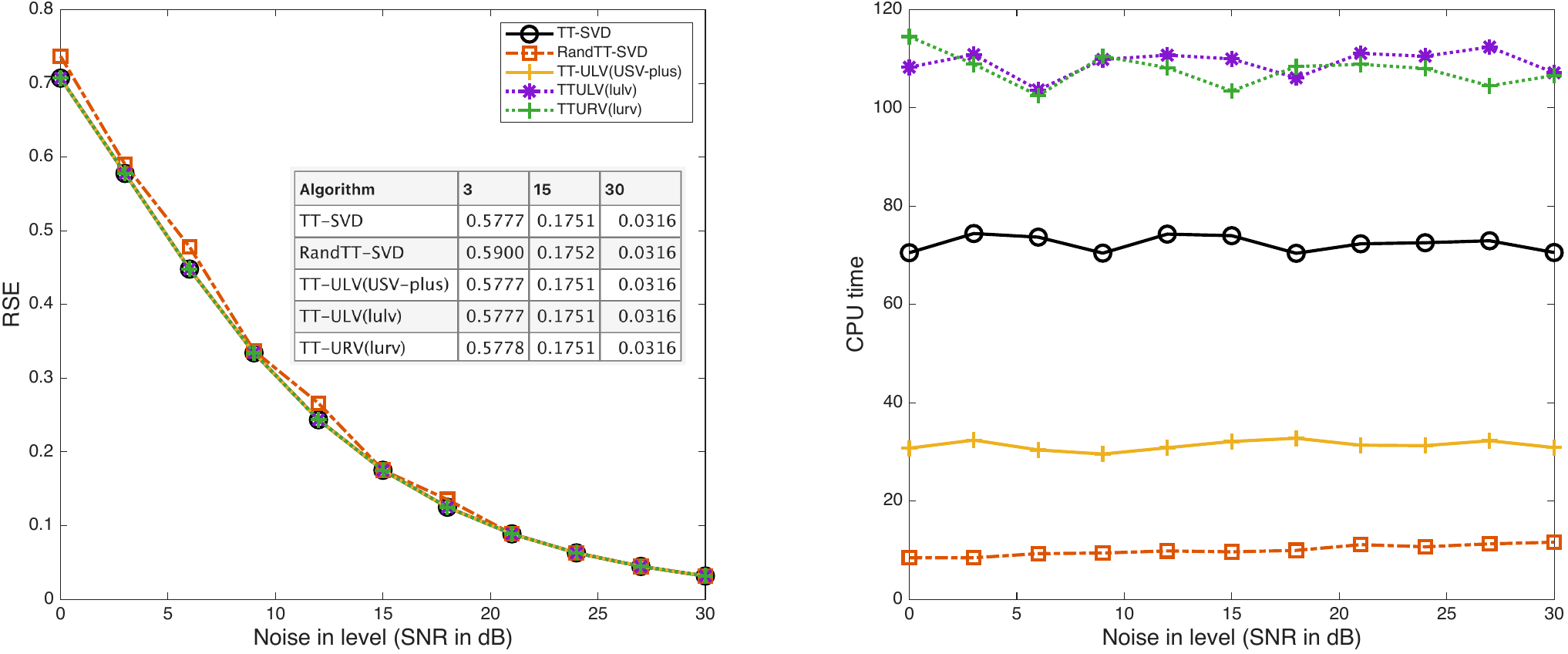}
	\caption{For different tensors $\mathcal{A}$ with $\mathrm{SNR}=0,5,\ldots, 25, 30$, and given fixed TT-rank $\{20,20,20 \}$, the results by applying TT-SVD, RandTT-SVD, and the TT-UTV with various UTV algorithms.}
	\label{fig: ARSEs}
\end{figure}

For the tensor $\mathcal{A}$ with $\mathrm{SNR}=30$, we test the performance of the algorithms under various TT‑ranks. As illustrated in Fig. \ref{fig: Aranks}, TT‑SVD achieves optimal accuracy, while TT‑UTV yields a comparable level of accuracy. It is worth noting that, in terms of runtime, TT‑ULV (USV‑plus) exhibits a clear advantage over TT‑SVD, and the advantage becomes more pronounced as the TT‑rank decreases. RandTT‑SVD offers the fastest speed, but at the cost of a noticeable loss in accuracy.
\begin{figure}[htbp!]
	\centering
	\includegraphics[width = 0.98\textwidth]{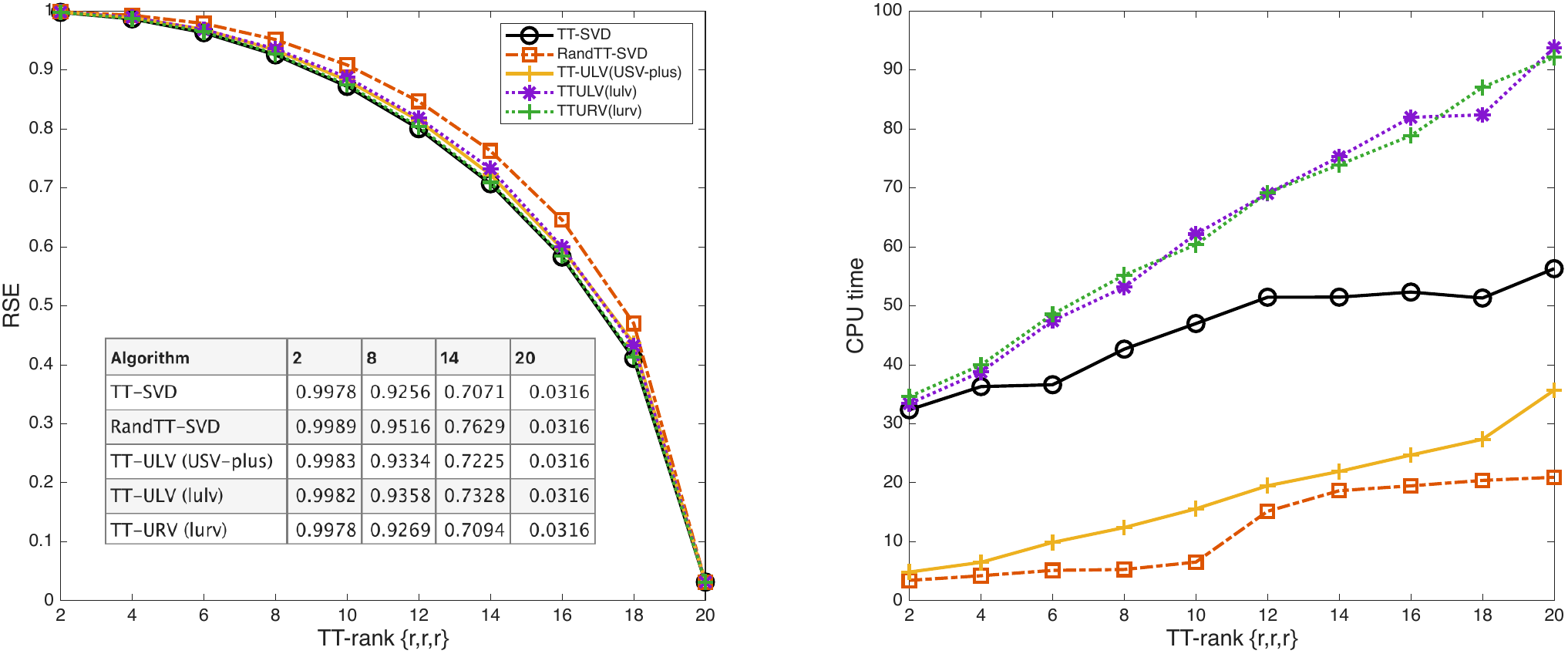}
	\caption{For given tensor $\mathcal{A}$ with $\mathrm{SNR}=30$, and different fixed TT-rank $\{r,r,r\}$ with $r=2,4,\ldots, 18,20$, the results by applying TT-SVD, RandTT-SVD, and the TT-UTV with various UTV algorithms.}
	\label{fig: Aranks}
\end{figure}
\begin{figure}[htbp!]
	\centering
	\includegraphics[width = 0.98\textwidth]{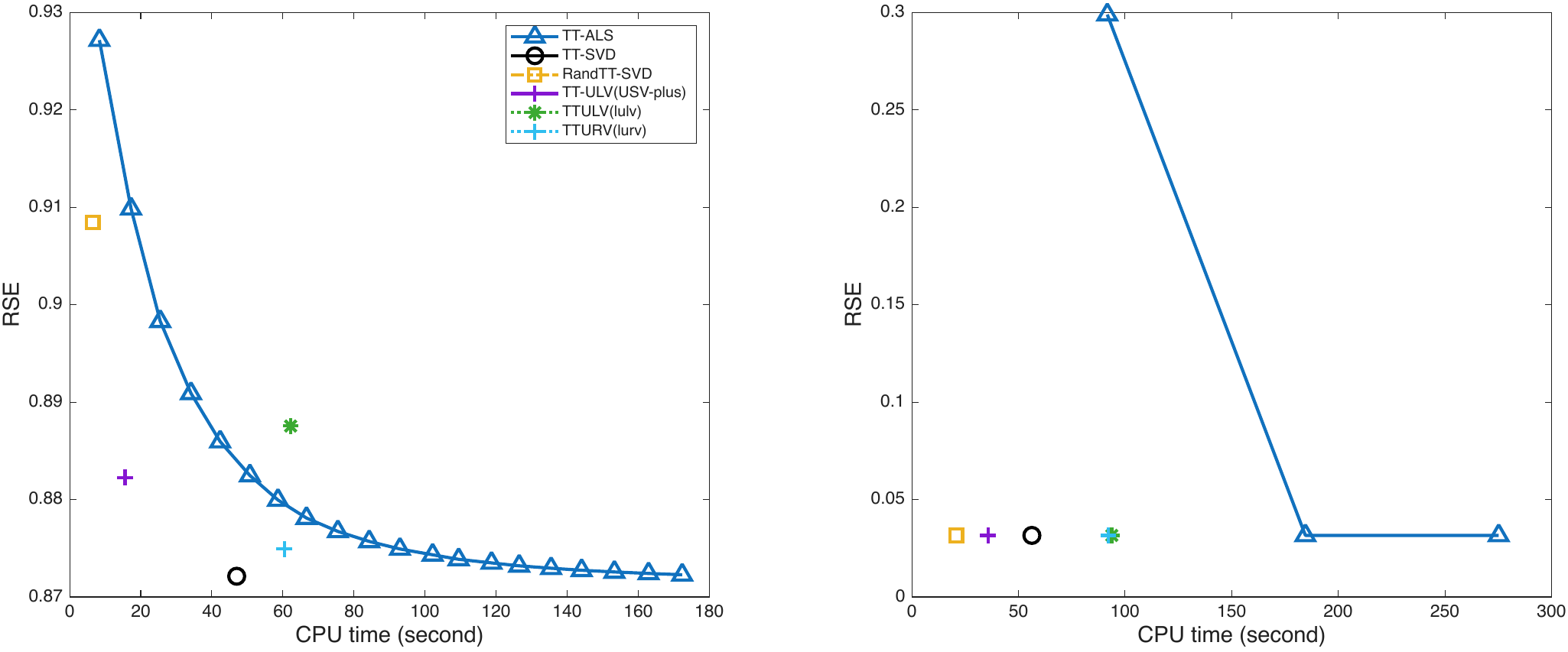}
	\caption{For given tensor $\mathcal{A}$ with $\mathrm{SNR}=30$, and the fixed TT-ranks $\{10,10,10\}$ and $\{20,20,20\}$, the results by applying TT-ALS, TT-SVD, RandTT-SVD, and the TT-UTV with various UTV algorithms.}
	\label{fig: TT-ALS}
\end{figure}

We next test the TT‑ALS algorithm \cite{Holtz2012the}, which belongs to the class of optimization methods. Given an initial guess, it iteratively updates the TT cores by solving alternating least‑squares subproblems. Here, we randomly set the initial values, and to mitigate the influence of randomness, we report the averaged results over ten independent runs. For a direct visual comparison among the algorithms, we plot the convergence curves of TT‑ALS in terms of CPU time, where other algorithms are represented as individual points; the closer a point lies to the bottom‑left corner, the better the algorithmic performance. The results in Fig. \ref{fig: TT-ALS} show that, for the two fixed TT‑rank experiments, TT‑ALS eventually achieves excellent accuracy but requires a long CPU time.

\textbf{Example 2} To test the fixed-precision versions of our algorithms, the following tensor is obtained by discretizing a smooth function as follows:

$$
\begin{aligned}
\mathcal{B}\left(i_1, i_2, i_3, i_4\right) & =\sin \sqrt{\sum_{k=1}^4\left(\frac{i_k-1}{I-1}\right)^2}, 
\end{aligned}
$$
with $i_1, i_2, i_3, i_4=1,2, \ldots, 160$. The type of tensor $\mathcal{B}$ comes from \cite{lestandi2021numerical}.

\begin{table}[htbp]
\centering
\caption{For given different error tolerance $\epsilon > 0$, the results by applying
TT-SVD, RandTT-SVD, and TT-UTV algorithms to estimate the TT-rank of the tensor $\mathcal{B}$.}
\label{tab: Beps}
\begin{adjustbox}{width=1\linewidth}
\begin{tabular}{c c c c c c}
\toprule 
$\varepsilon$ & TT-SVD & RandTT-SVD & TT-ULV(USV-plus) & TT-ULV(lulv) & TT-URV(lurv) \\
\midrule 
$10^{-2}$ & (2,2,2) & (3,3,3) & (2,2,2) & (2,2,2) & (2,2,2) \\
$10^{-3}$ & (3,3,3) & (5,4,4) & (3,3,3) & (3,3,3) & (3,3,3) \\
$10^{-4}$ & (5,5,5) & (6,6,6) & (5,5,5) & (5,5,5) & (5,5,5) \\
$10^{-5}$ & (7,7,6) & (8,9,8) & (6,7,6) & (7,7,6) & (6,7,7) \\
\bottomrule 
\end{tabular}
\end{adjustbox}
\end{table}

For the tensor $\mathcal{B}$, given different prescribed error tolerances $\epsilon>0$, the resulting TT‑ranks obtained by various algorithms are reported in Table \ref{tab: Beps}. The TT-ranks produced by the several TT‑UTV variants are highly consistent with those from TT‑SVD, while RandTT‑SVD yields slightly higher TT‑ranks, implying that the post‑processing of its computed TT cores requires larger memory and computational effort. The corresponding RSEs and computation times are displayed in Fig. \ref{fig: Beps}. The left panel indicates that the TT approximation errors produced by all algorithms are strictly less than the given tolerances. The right panel shows that the TT‑ULV (USV‑plus) algorithm not only attains very low TT‑ranks but also exhibits a remarkable enhancement in computational efficiency.

\begin{figure}[htbp!]
	\centering
	\includegraphics[width = 0.98\textwidth]{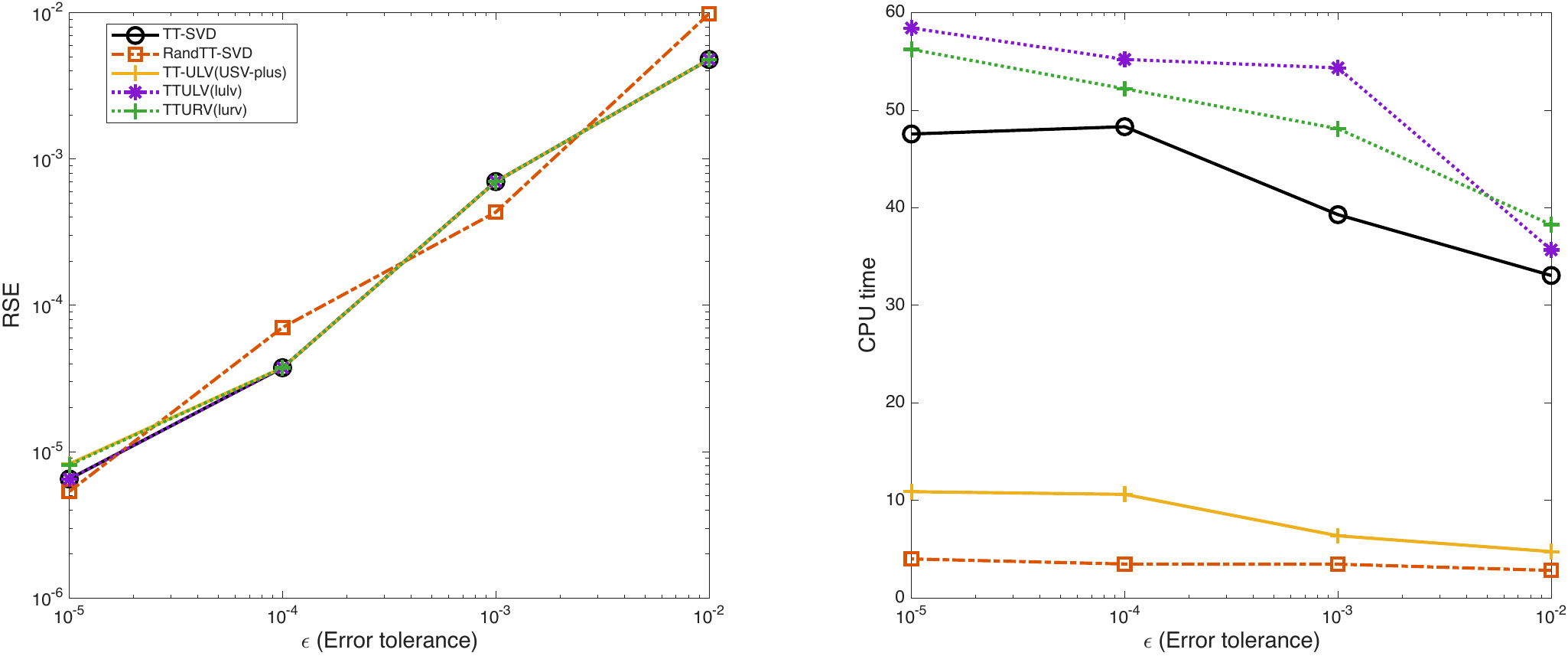}
	\caption{For given different error tolerance $\epsilon > 0$, the results by applying
TT-SVD, RandTT-SVD, and TT-UTV algorithms to the tensor $\mathcal{B}$.}
	\label{fig: Beps}
\end{figure}

\subsection{Real datasets}

In the sequel, we test these algorithms on real datasets, including video data compression and MRI data recovery.

\subsubsection{Video data}

The experimental dataset was derived from a source video\footnote{$https://youtu.be/X9cCgErwqQg$} comprising 3,757 frames at a resolution of $2160\times 3840$. Owing to the substantial memory requirements of the raw video, each frame was downsampled to $72\times 128$ pixels using the MATLAB `imresize' command, which performs a two‑dimensional image resizing operation using bicubic interpolation by default. The subsequent separation of the RGB channels produced a tensor of size $72\times 128 \times 3$ for each frame. Hence,  a fourth-order tensor of dimensions $3757\times 72\times 128\times 3$ is formed, where the first dimension corresponds to the batch index.

The performance of these algorithms was evaluated under fixed TT-ranks $(r_1, r_2, r_3)$, as summarized in Table~\ref{table: 1}. Numerical results demonstrate that the TT-UTV (USV-plus) algorithm achieves effective reductions in computational time while preserving accuracy comparable to that of TT-SVD. This advantage is particularly pronounced for low-rank cases, which can achieve speedups of more than ten times in some instances. In this set of experiments, the TT‑UTV algorithms based on low‑rank UTV decomposition obtain a level of accuracy that is fully comparable to that of TT‑SVD, albeit with longer actual computation times. The RandTT-SVD algorithm offers the fastest runtime at the cost of a non-negligible loss in accuracy.

\begin{table}[htbp!]
	\centering
	\caption{For different TT-rank, the numerical results by applying TT-SVD, RandTT-SVD, and TT-UTV algorithms on the video data tensor.}
	\label{table: 1}
	\begin{adjustbox}{width=1\linewidth}
	\begin{tabular}{c!{\thickvline}cccc!{\thickvline}cccc}
		\thickhline & \multicolumn{4}{c!{\thickvline}}{ Time (s) } & \multicolumn{4}{c}{ RSE } \\
		\mediumhline  $(r_1, r_2)$ & $(5,5,2)$ & $(10,10,2)$ & $(15,15,2)$ & $(20,20,2)$ & $(5,5,2)$ & $(10,10,2)$ & $(15,15,2)$ & $(20,20,2)$ \\
		\hline TT-SVD & 18.4 & 18.8 & 19.0 & 19.1 & 0.1202 & 0.0955 & 0.0849 & 0.0794 \\
        \hline RandTT-SVD & 0.3 & 0.4 & 0.5 & 0.5 & 0.1245 & 0.0985 & 0.0883 & 0.0830 \\
        \hline TT-ULV (USV-plus) & 1.0 & 1.9 & 2.7 & 3.8 & 0.1205 & 0.0956 & 0.0851 & 0.0797 \\
		\hline TT-ULV (lulv) & 22.8 & 24.5 & 25.6 & 27.6 & 0.1202 & 0.0961 & 0.0850 & 0.0796 \\
		\hline TT-URV (lurv) & 39.2 & 51.5 & 58.3 & 62.8 & 0.1202 & 0.0960 & 0.0850 & 0.0796 \\
		\thickhline
	\end{tabular}
\end{adjustbox}
\end{table}

We also evaluated the fixed-precision variants of the algorithms under different error tolerance settings, with the error weights $w_k$ set uniformly to $1/\sqrt{3}$. As presented in Table~\ref{table: 2},  the TT-ranks determined by the TT-UTV algorithms closely match those produced by TT-SVD. As expected, larger error tolerances lead to smaller TT-ranks. In all cases, the actual computational errors remain strictly below the specified error tolerances $\varepsilon$, which also confirms the effectiveness of our theoretical error bounds derived for the TT-UTV algorithms. In addition, the TT-UTV algorithm based on USV-plus exhibits a great advantage in computational efficiency. Although the RandTT-SVD achieves the smallest runtime, it produces substantially larger TT-ranks than other methods. Consistent with complexity analysis, the computational efficiency of the UTV-based algorithms improves as the resultant ranks become smaller.
\begin{table}[htbp!]
	\centering
	\caption{For different prescribed tolerance $\epsilon>0$, the numerical results by TT-SVD, RandTT-SVD, and TT-UTV algorithms on the video data tensor.}
	\label{table: 2}
	\begin{adjustbox}{width=\linewidth}
	\begin{tabular}{c!{\thickvline}ccc!{\thickvline}ccc!{\thickvline}ccc}
		\thickhline  & \multicolumn{3}{c!\thickvline}{$\varepsilon=0.09$} & \multicolumn{3}{c!\thickvline}{$\varepsilon=0.1$} & \multicolumn{3}{c}{$\varepsilon=0.11$} \\
		 \hline Algorithm & $(r_1, r_2, r_3)$ & Time (s) & RSE & $(r_1, r_2, r_3)$ & Time (s) & RSE & $(r_1, r_2, r_3)$ & Time (s) & RSE \\
		\mediumhline TT-SVD & $(51,16,2)$ & 24.1 & 0.0758 & $(40,11,2)$ & 23.8 & 0.0883 & $(31,8,2)$ & 24.4 & 0.0889 \\
        \hline RandTT-SVD & $(129,41,2)$ & 3.0 & 0.0759 & $(99,28,2)$ & 2.4 & 0.0910 & $(82,24,3)$ & 2.1 & 0.0885 \\
        \hline TT-ULV (USV-plus) & $(55,17,2)$ & 10.3 & 0.0742 & $(43,12,2)$ & 8.0 & 0.0812 & $(33,8,2)$ & 5.5 & 0.0883 \\
		\hline TT-ULV (lulv) & $(52,16,2)$ & 35.5 & 0.0760 & $(41,11,2)$ & 33.6 & 0.0834 & $(32,8,2)$ & 31.5 & 0.0889 \\
        \hline TT-URV (lurv) & $(51,16,2)$ & 36.2 & 0.0760 & $(40,11,2)$ & 33.0 & 0.0834 & $(31,8,2)$ & 30.5 & 0.0890 \\
		\thickhline
	\end{tabular}
	\end{adjustbox}
\end{table}

\subsubsection{Image compression}

In this experiment, we test the performance of TT-UTV algorithms on image compression. Three color images are picked from the Berkeley Segmentation dataset \cite{martin2001database}, which are of size $321\times 481$ pixels. In order to facilitate the dimension factorizations, they are resized into $324\times 486$ dimensions using bicubic interpolation via the MATLAB `imresize' command \cite{ko2020fast}. Then, each processed image is represented by three $324 \times 486$ matrices corresponding to its RGB channels. We first reshape the channel matrices into higher-order tensors of size $18\times 18\times 18\times 27$, and then compute their low-rank approximation using the TT-SVD and TT-UTV algorithms. 

\begin{figure}[htbp!]
	\centering
	\subfigure{\includegraphics[width = 0.32\textwidth]{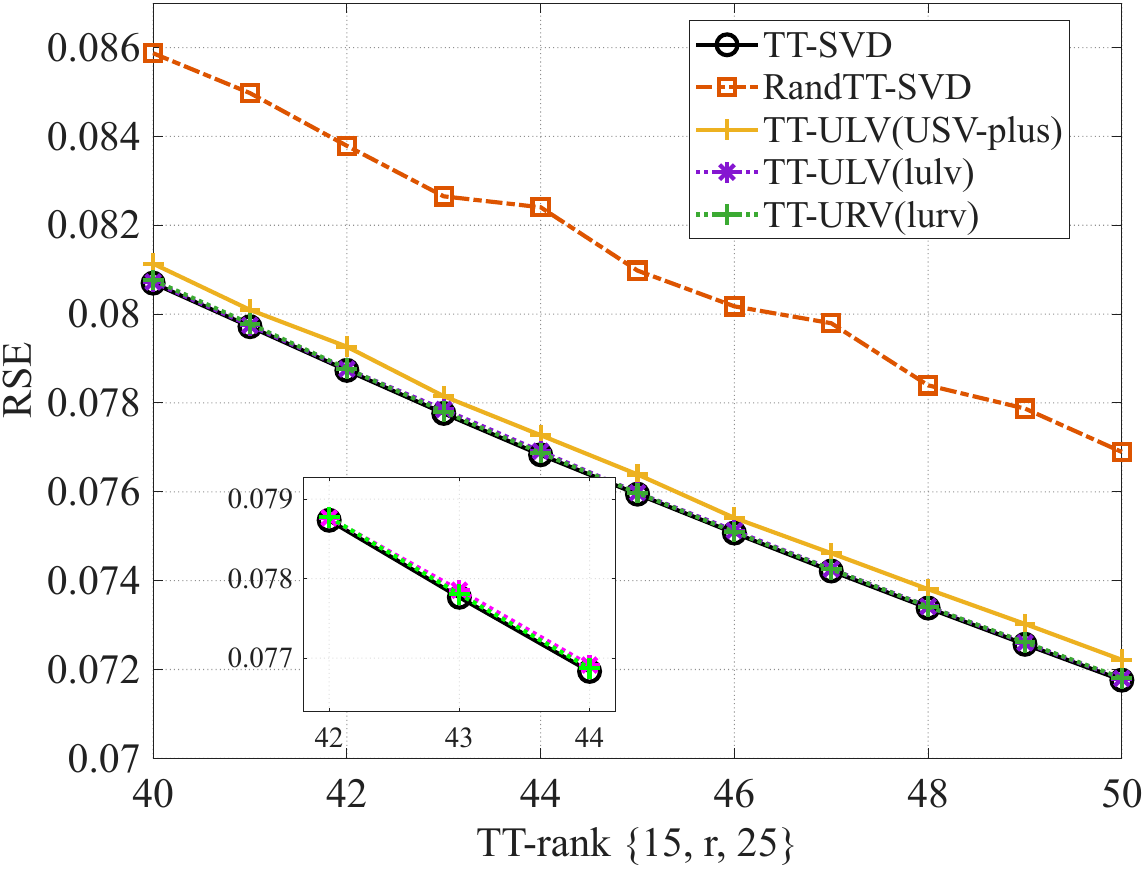}}
	\subfigure{\includegraphics[width = 0.32\textwidth]{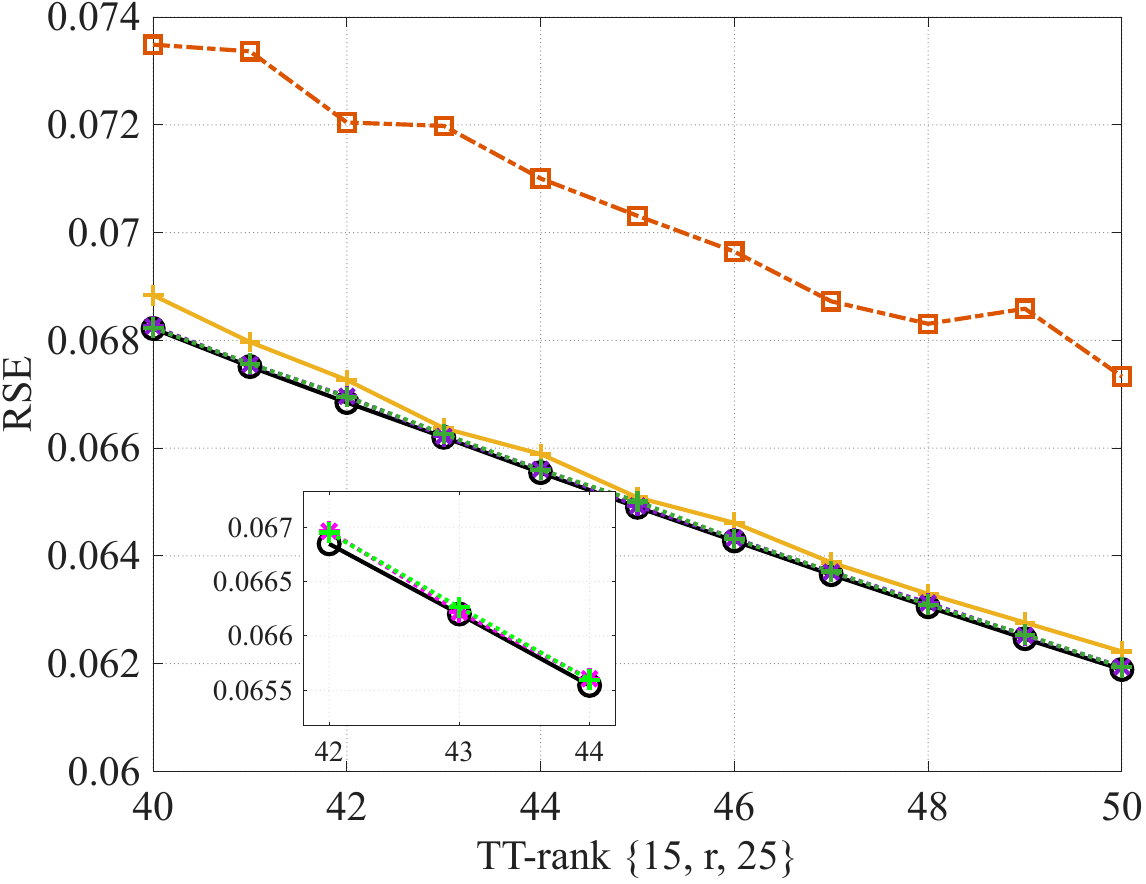}}
	\subfigure{\includegraphics[width = 0.32\textwidth]{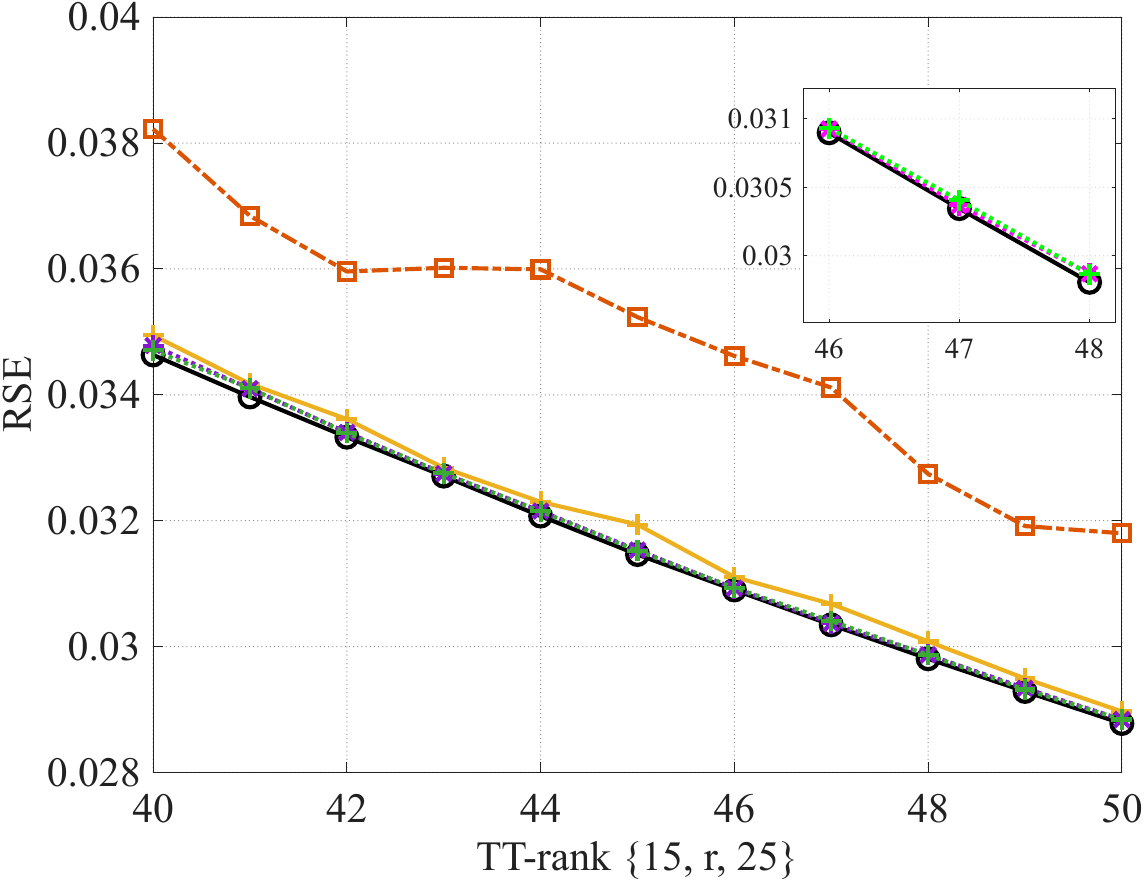}}
	\caption{For given TT-ranks $\{ 15,r,25 \}$ with $r=40,41,\ldots, 50$, the RSEs of TT compressions by applying TT-SVD, RandTT-SVD, and TT-UTV algorithms on these images.}
	\label{fig: RSEsimage}
\end{figure}

Fig.~\ref{fig: RSEsimage} reports the RSEs under varying TT-ranks $(1,15,r,25,1)$, where $r$ ranges from 40 to 50 by increments of 1. The results indicate that the RSEs achieved by the TT-SVD and TT-UTV algorithms are very close across all tested ranks.
Fig.~\ref{fig:images} demonstrates the compression performance on three test images under a fixed TT-rank of $(1, 15, 45, 25, 1)$. Visually, these compressed images are too similar to distinguish their resolutions and retain most of the structural and textural information from the original images.
\begin{figure}[htbp!]
	\centering
	\captionsetup[subfigure]{labelformat=empty}
	\subfigure{\includegraphics[width = 0.24\textwidth]{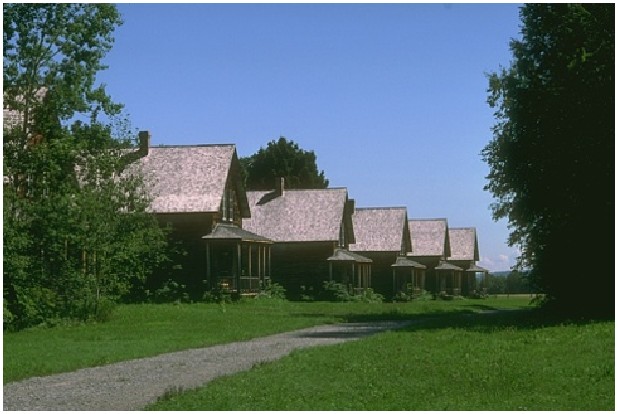}}
	\subfigure{\includegraphics[width = 0.24\textwidth]{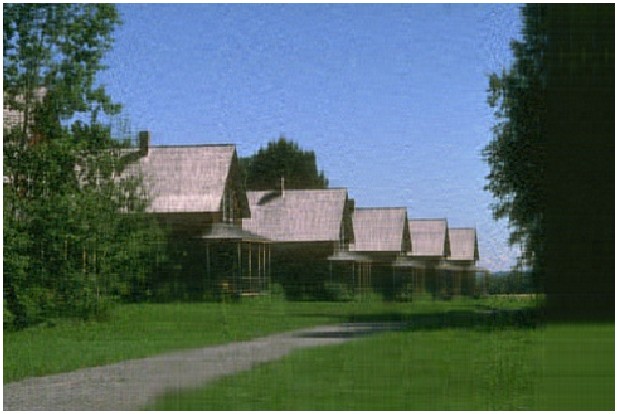}}
	\subfigure{\includegraphics[width = 0.24\textwidth]{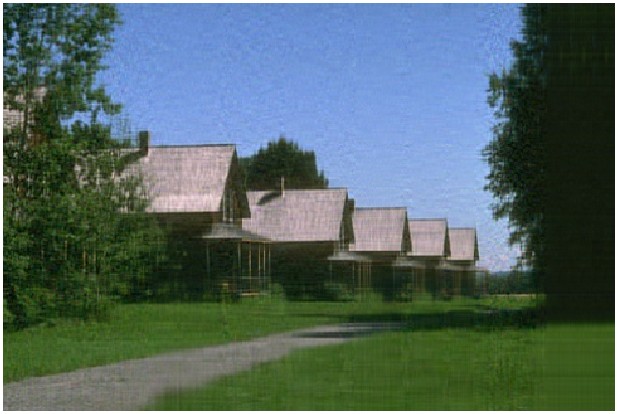}}
	\subfigure{\includegraphics[width = 0.24\textwidth]{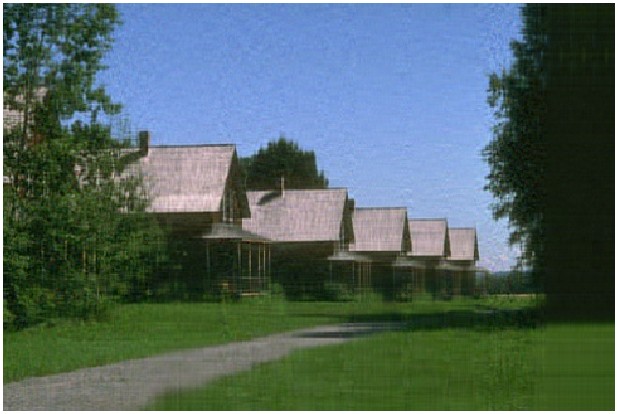}} \\
	\subfigure{\includegraphics[width = 0.24\textwidth]{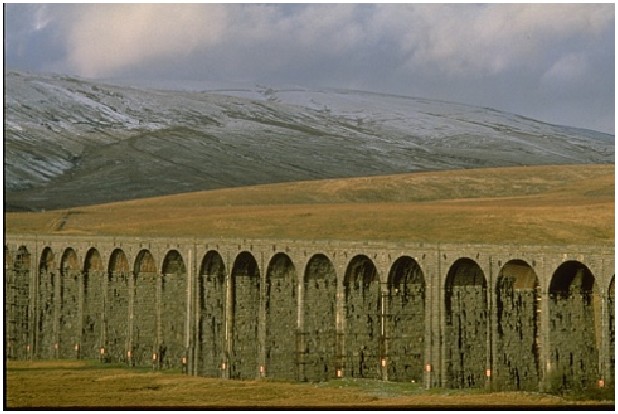}}
	\subfigure{\includegraphics[width = 0.24\textwidth]{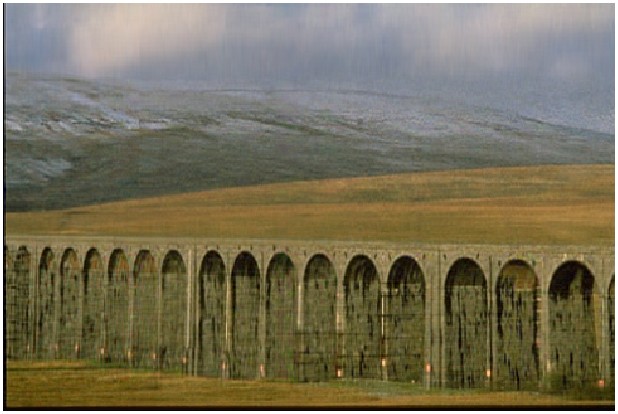}}
	\subfigure{\includegraphics[width = 0.24\textwidth]{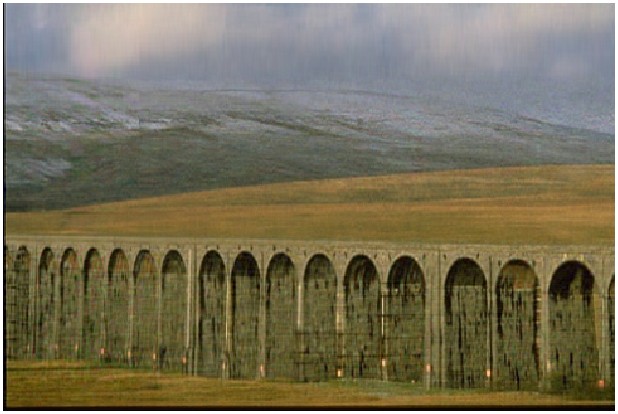}}
	\subfigure{\includegraphics[width = 0.24\textwidth]{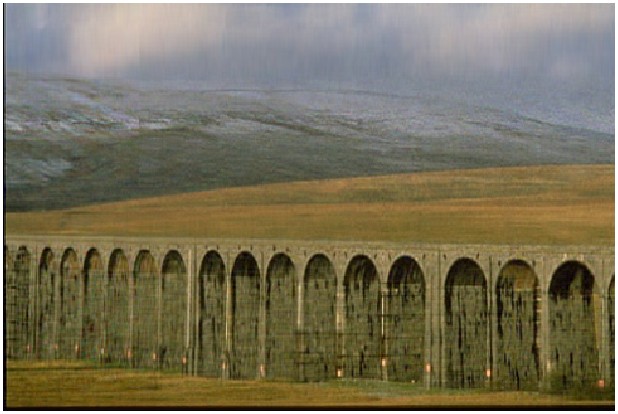}} \\
	\captionsetup[subfigure]{labelformat=simple}
	\setcounter{subfigure}{0}
	\subfigure[Original images]{\includegraphics[width = 0.24\textwidth]{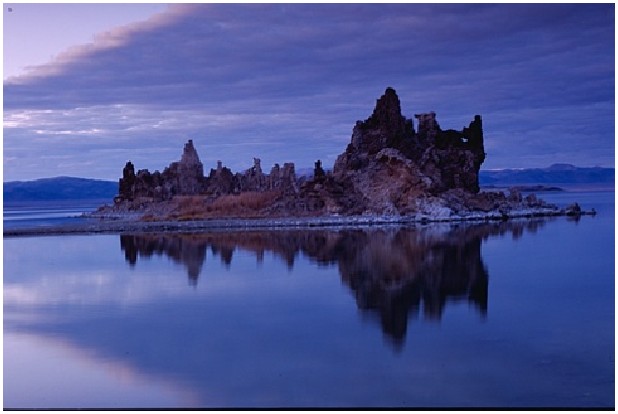}}
	\subfigure[TT-SVD]{\includegraphics[width = 0.24\textwidth]{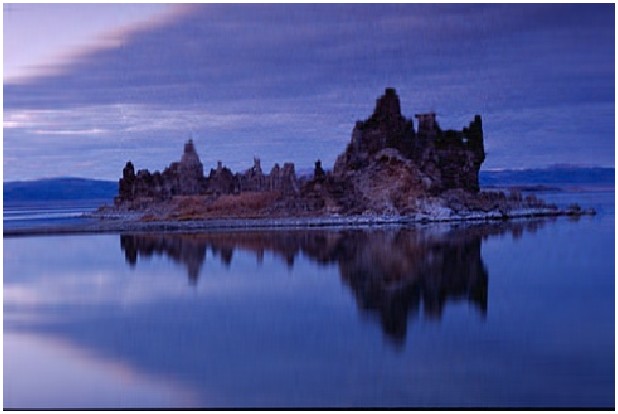}}
	\subfigure[TT-ULV]{\includegraphics[width = 0.24\textwidth]{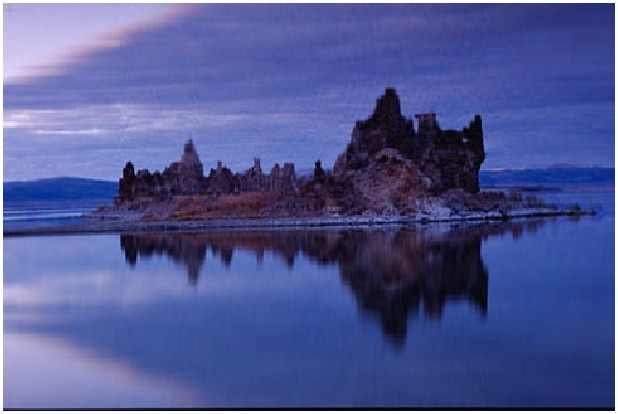}}
	\subfigure[TT-URV]{\includegraphics[width = 0.24\textwidth]{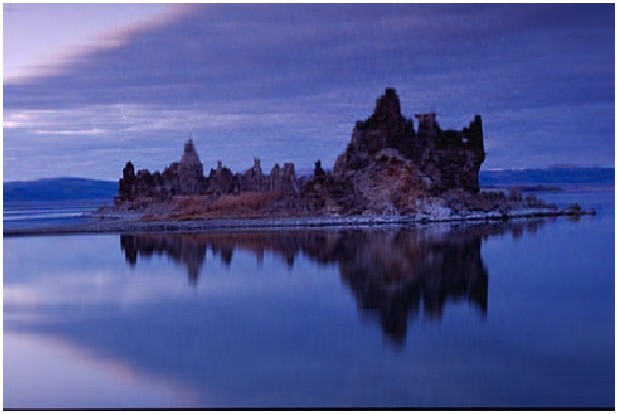}}
	\caption{Image compression via TT format. From left to right: original images, compressed versions from the TT approximations obtained via TT-SVD, TT-ULV (lulv), and TT-URV (lurv), respectively.}
	\label{fig:images}
\end{figure}
\subsubsection{MRI data completion}

When a portion of the information in high-dimensional data is missing or distorted, we can recover the data based on TT-format tensor completion, which can be solved via the Riemannian gradient descent (RGrad) method \cite{Steinlechner2016}. Unlike the traditional gradient descent method in Euclidean spaces, the RGrad method uses the gradients on Riemannian manifolds formed by the fixed-rank TT-format tensors. From an initial TT-format tensor in the Riemannian manifold, the first step in each iteration is computing the Riemannian gradient $\mathcal{RG}(\mathcal{X}_k)$ at the current point $\mathcal{X}_k$. Then compute the estimated tensor $\widehat{\mathcal{X}}_{k+1} = \mathcal{X}_k - \alpha_k \mathcal{RG}(\mathcal{X}_k)$, where $\alpha_k$ is the step size, but it may not locate in the prescribed Riemannian manifold. Hence, the next step retracts $\widehat{\mathcal{X}}_{k+1}$ back to the manifold by TT-SVD algorithm to generate the next iteration point $\mathcal{X}_{k+1}$ \cite{Steinlechner2016, Cai2022provable}. In this experiment, we explore the effectiveness of the RGrad method while the retraction step in each iteration is substituted by the TT-UTV algorithms. The step size $\alpha_k $ is determined via backtracking line search, computed as $\alpha_k = \frac{\left \langle \mathrm{P}_{\Omega}\mathcal{RG}(\mathcal{X}_k), \mathrm{P}_{\Omega}\mathcal{X}- \mathrm{P}_{\Omega}\mathcal{X}_k  \right \rangle }{ \left \langle \mathrm{P}_{\Omega}\mathcal{RG}(\mathcal{X}_k),  \mathrm{P}_{\Omega}\mathcal{RG}(\mathcal{X}_k) \right \rangle }$, where $\mathcal{X}$ denotes the original tensor, $\mathrm{P}_{\Omega}$ is the projection operator onto the index set $\Omega$ of observed entries. The initial tensor is uniformly set by the sequential spectral initialization \cite{Cai2022provable}.

The experimental results are evaluated by RSE, peak signal-to-noise Ratio (PSNR) \cite{ko2020fast}, and structural similarity index (SSIM) \cite{wang2004image}, which are commonly used measures in data inpainting tasks. Let $\mathcal{M}, \widehat{\mathcal{M}} \in \mathbb{R}^{I_1\times \cdots \times I_d}$ denote the ground-truth tensor and the recovered tensor, respectively. Then, the PSNR metric is defined as
\begin{align*} 
	\mathrm{PSNR} &= 10\log_{10}\frac{\mathcal{M}_{max}^2}{\frac{1}{\prod_{k=1}^{d}I_k}\| \widehat{\mathcal{M}} - \mathcal{M} \|_F^2},
\end{align*}
where $\mathcal{M}_{max}$ is the maximum value in the ground-truth tensor $\mathcal{M}$. {The SSIM for the MRI data was computed on a slice-by-slice basis for the brain images, and the values were subsequently averaged to yield a final score. Smaller RSE, larger PSNR, and larger SSIM indicate better recovery performance.

We use the cubical magnetic resonance imaging  (MRI) data\footnote{$https://brainweb.bic.mni.mcgill.ca/brainweb/selection\_normal.html$}, which is a third-order tensor denoted by $\mathcal{M}$ of size $181\times 217\times 181$. Firstly, we determine the truncated TT-ranks of this tensor for the Riemannian manifold. We set the prescribed RSE $\varepsilon = 0.1$ for the TT decomposition via the TT-SVD algorithm, which produces the approximate TT tensors with TT-ranks equal to $(1,55,41,1)$. We choose a slice $\mathcal{M}_{::36}$ from the MRI data shown in Fig.~\ref{fig:RGrad}. The four subfigures in the first line present the original slice and the compressed ones via various TT decomposition algorithms. They will serve as references for the same slice after tensor completion.
\begin{figure}[htbp!]
	\centering
	\subfigure[Original slice]{\includegraphics[width = 0.24\textwidth]{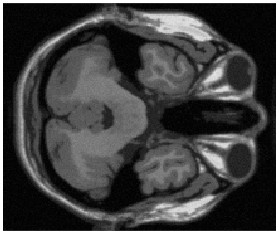}}
	\subfigure[TT-SVD compression]{\includegraphics[width = 0.24\textwidth]{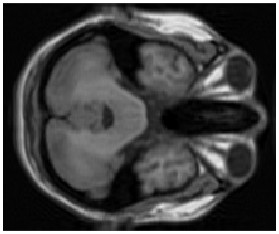}} 
	\subfigure[TT-ULV compression]{\includegraphics[width = 0.24\textwidth]{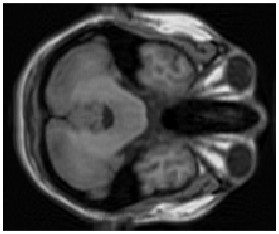}} 
	\subfigure[TT-URV compression]{\includegraphics[width = 0.24\textwidth]{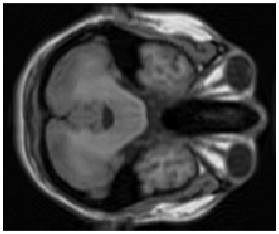}}\\
	\subfigure[Original slice with missing pixels]{\includegraphics[width = 0.24\textwidth]{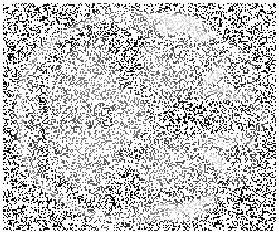}} 
	\subfigure[RGrad method with TT-SVD]{\includegraphics[width = 0.24\textwidth]{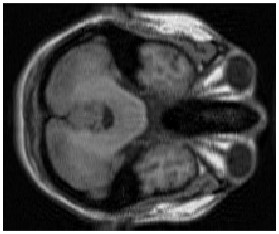}} 
	\subfigure[RGrad method with TT-ULV (lulv)]{\includegraphics[width = 0.24\textwidth]{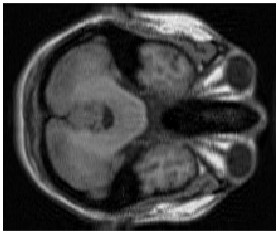}} 
	\subfigure[RGrad method with TT-URV (lurv)]{\includegraphics[width = 0.24\textwidth]{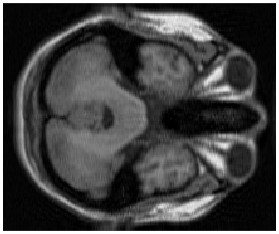}}
	\caption{The reconstruction performance on the brain slice $\mathcal{M}_{::36}$.}
	\label{fig:RGrad}
\end{figure}

Afterwards, to test the effectiveness of the RGrad method with TT-UTV algorithms, 70\% elements of the original MRI data $\mathcal{M}\in \mathbb{R}^{181\times 217 \times 181}$ are randomly removed, and we would like to recover them. We perform three experiments using the RGrad method under fixed TT-ranks $(1,55,41,1)$, where the retracting steps are computed by TT-SVD, TT-ULV (USX-plus), TT-ULV (lulv), and TT-URV (lurv) algorithms, respectively. The second row in Fig.~\ref{fig:RGrad} shows the original slice with missing pixels and the recovered slices by RGrad methods at 40th iteration with TT-SVD, TT-ULV (lulv) and TT-URV (lurv), respectively. These algorithms perform well and recover most of the missing data of the original tensor. In addition, Fig.~\ref{fig:RGraderrors} (a) reports that the RSEs of the recovered tensor decline with the iterations, converging to a level of about the prescribed accuracy $0.1$, and (b) shows that the PSNRs and SSIMs increase during the restoration process. Both the curves and the values in the inset tables are very close, indicating that the RGrad method with TT-UTV is as effective as the Rgrad method with TT-SVD.

\begin{figure}[htbp!]
	\centering
	\subfigure{\includegraphics[width = 0.31\textwidth]{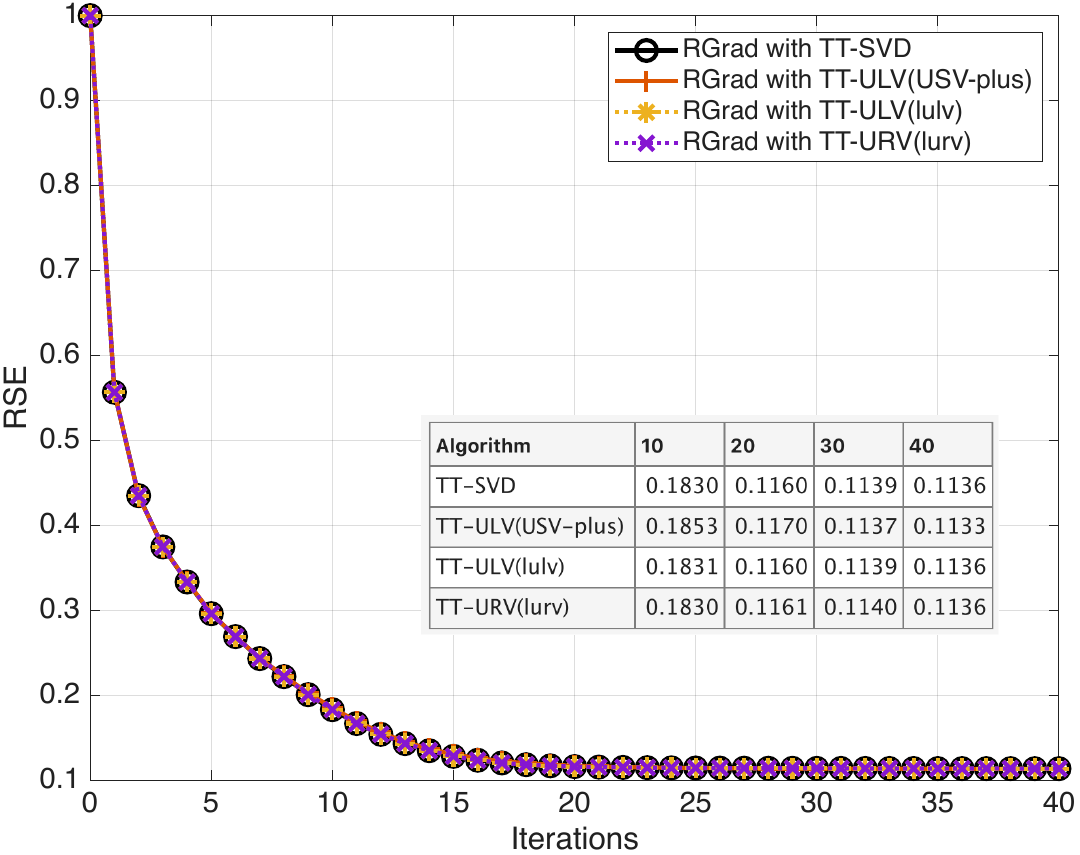}}
	\subfigure{\includegraphics[width = 0.31\textwidth]{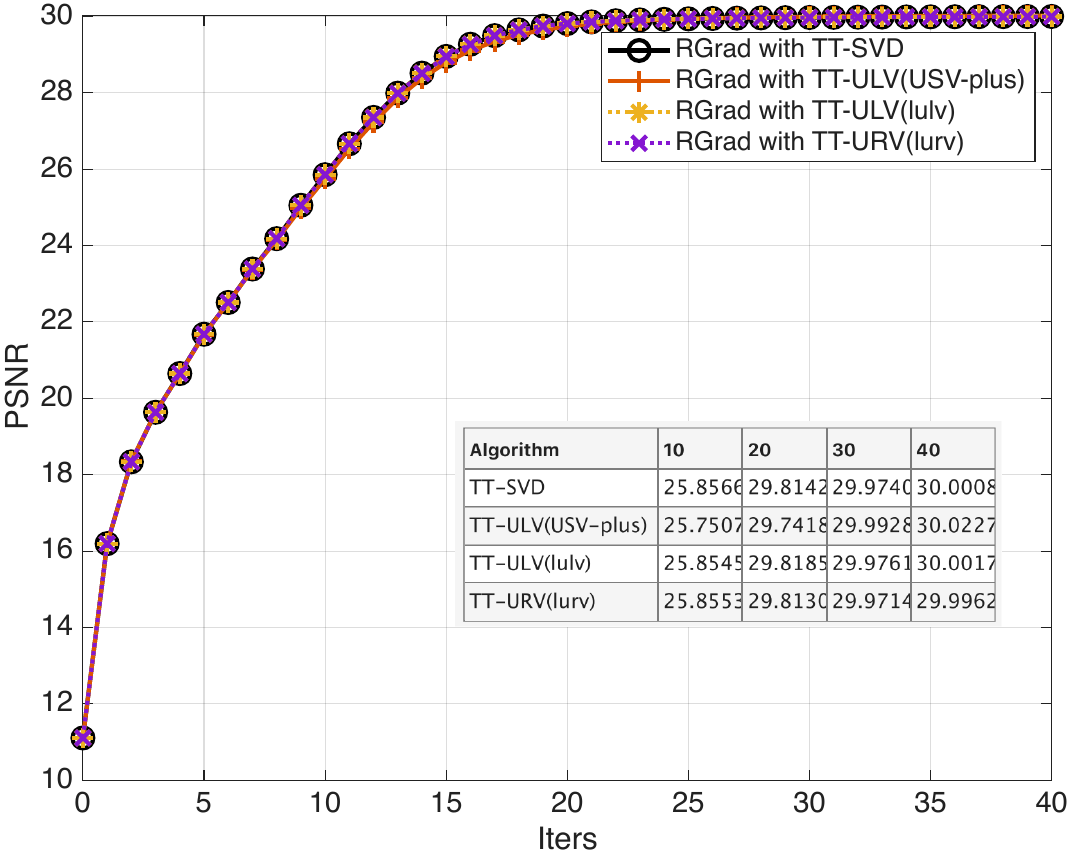}}
    \subfigure{\includegraphics[width = 0.32\textwidth]{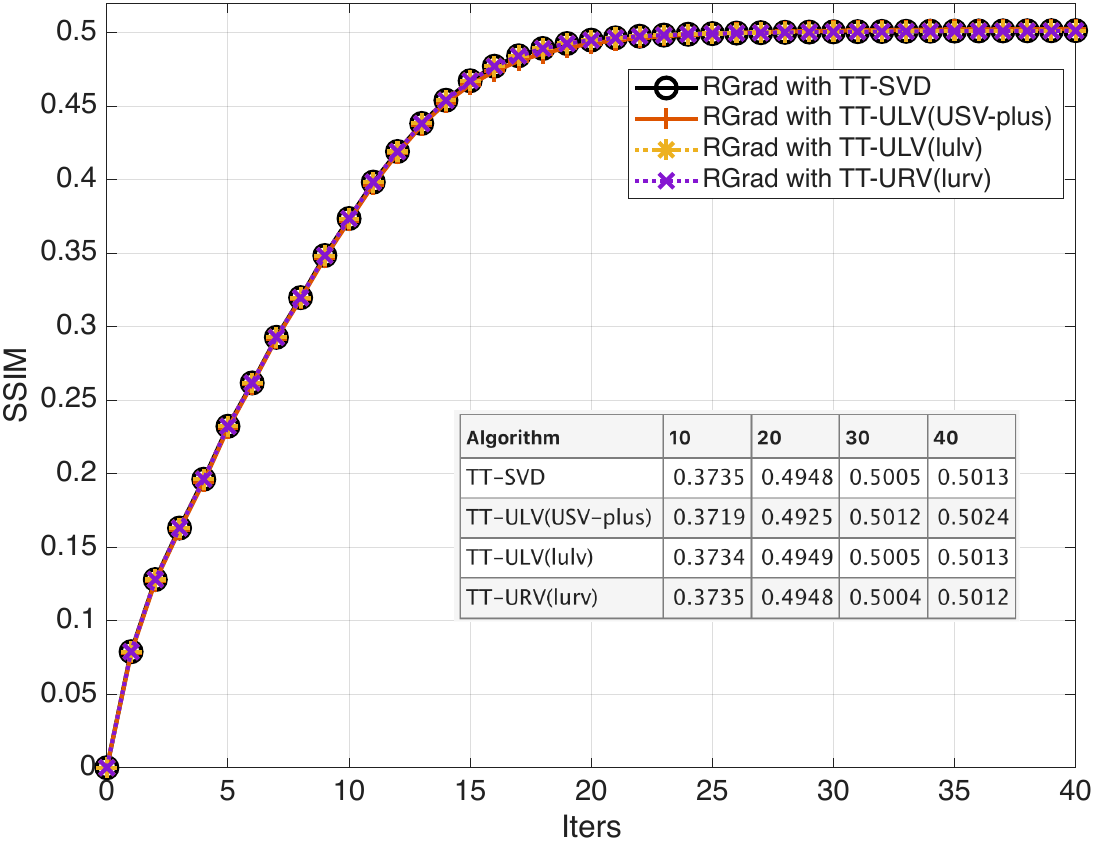}}
	\caption{Evolution of RSEs, PSNRs and SSIMs of the recovered MRI tensor $\widehat{\mathcal{M}}$ over iterations.}
	\label{fig:RGraderrors}
\end{figure}


\section{Conclusions}
\label{sec: 5}

For computing the tensor-train (TT) decomposition, we introduced a novel algorithmic framework based on the rank-revealing UTV decomposition to compute low-rank matrix approximations, which yields the proposed TT-UTV algorithms. This framework unifies a broad class of UTV-type decompositions for TT computation, fully leveraging the practical advantages of UTV decompositions over the full SVD in terms of efficiency and flexibility. We establish a tight error bound for the proposed method, recommend distinct updating strategies for the ULV and URV variants, and derive the corresponding fixed-precision algorithms for adaptively determining the TT-ranks. Numerical experiments across various applications are performed, in the fixed-rank setting, the proposed TT-UTV algorithms can provide a good trade-off between computational accuracy and complexity, where the accuracy can achieve comparable to that of TT-SVD
; in the fixed-precision setting, the numerical results validate their reliability and demonstrate the effectiveness of the established theoretical error bounds. 

UTV-based algorithms are highly adaptable to streaming matrix data, allowing the decomposition of new data to be efficiently updated from previous decomposition at a quadratic computational cost \cite{Stewart1992an}. future work will focus on integrating these updating strategies into the TT-UTV framework to enable efficient TT decomposition updates for streaming tensor data. Additionally, developing randomized variants for the TT-UTV algorithms represents a promising direction to enhance scalability and real-time processing capability. Motivated by the numerical experiments, updating and optimizing the existing low-rank UTV decomposition toolbox \cite{fierro1999utv} based on contemporary computing platforms is also a worthwhile direction for future research.

\appendix
\section{} \label{sec: appendix}
The TT-ULV algorithm in a right-to-left sweep is given in Algorithm~\ref{alg:TT-ULV2}. Regarding the size of the matrix $\widehat{\mathbf{L}}$ in step 4, we consider two distinct cases. If the low-rank approximation $\widehat{\mathbf{U}}\widehat{\mathbf{L}}\widehat{\mathbf{V}}$ in step 4 is of size $\widehat{\mathbf{L}}\in \mathbb{R}^{I_1\cdots I_{k-1}\times r_{k-1}}$ instead of $\widehat{\mathbf{L}}\in \mathbb{R}^{r_{k-1}\times r_{k-1}}$, that is, the two terms $(\mathbf{U}_{1}\mathbf{L}_{11}+\mathbf{U}_2\mathbf{L}_{21})\mathbf{V}^{\top}_{1}$ in the ULV decomposition \eqref{eq: ulv} are retained to make the matrix $\widehat{\mathbf{V}}=\mathbf{V}_1$ orthogonal to the residual term $\mathbf{E} = \mathbf{U}_2\mathbf{L}_{22}\mathbf{V}^{\top}_2$, then results similar to Theorem \ref{thm: 3.1} and Corollary \ref{cor: 3.1} also hold for the TT-ULV algorithm~\ref{alg:TT-ULV2} in the right-to-left sweep. But this approach leads to extra matrix multiplication $\mathbf{U}_2\mathbf{L}_{21}$ in each iteration. 

\begin{algorithm}[!htpb]
	\renewcommand{\algorithmicrequire}{\textbf{Input:}}
	\renewcommand{\algorithmicensure}{\textbf{Output:}}
	\caption{TT-ULV algorithm for fixed TT-ranks (right-to-left sweep)}
	\label{alg:TT-ULV2}
	\centering
	\begin{algorithmic}[1]
		\REQUIRE A $d$th-order tensor $\mathcal{A} \in \mathbb{R}^{I_1 \times \cdots \times I_d}$, and fixed TT-ranks $\{ r_0,r_1, \dots, r_d \}$ with $r_0=r_d=1$. 
		\ENSURE Right-orthogonal TT-cores $\mathcal{G}^{(1)}, \dots, \mathcal{G}^{(d)}$ of the approximation $\widehat{\mathcal{A}}$ with TT-ranks $r_k$'s.
		
		\STATE Temporary matrix: $\mathbf{C}=\mathrm{reshape}(\mathcal{A}, \left[ I_1\cdots I_{d-1}, I_d \right])$, i.e., the $(d-1)$th unfolding matrix $\mathbf{A}_{d-1}$.
		\STATE for $k=d$ to $2$ do
		\STATE \quad  $\mathbf{C}:=\operatorname{reshape}\left(\mathbf{C},\left[I_1\cdots I_{k-1}, I_{k}r_{k}\right]\right)$.
		\STATE \quad Compute the truncated rank-$r_{k-1}$ approximation from ULV decomposition $\mathbf{C}=\widehat{\mathbf{U}} \widehat{\mathbf{L}} \widehat{\mathbf{V}}^{\top}+\mathbf{E}$, where $\mathbf{E}$ denotes the residual part in the ULV decomposition \eqref{eq: ulv}.
		\STATE \quad New core: $\mathcal{G}^{(k)}=\operatorname{reshape}\left(\mathbf{V}_1^{\top},\left[r_{k-1}, I_k, r_k\right]\right)$.
		\STATE \quad  $\mathbf{C}= \widehat{\mathbf{U}}\widehat{ \mathbf{L}}$.
		\STATE end for
		\STATE $\mathcal{G}^{(1)}=\mathbf{C}$.
	\end{algorithmic}  
\end{algorithm} 

The theory of rank-revealing UTV decomposition tells us that the term $\mathbf{U}_2\mathbf{L}_{21}$ in Eq. \eqref{eq: ulv} can be very minor. Hence, as presented in Algorithm~\ref{alg:TT-ULV2}, the middle matrix $\widehat{\mathbf{L}}$ of the ULV truncation in step 4 should only retain the $\mathbf{L}_{11} \in \mathbb{R}^{r_{k-1}\times r_{k-1}}$ to reduce the computational cost. In contrast, the sharp bound in Corollary \ref{cor: 3.1} cannot be obtained theoretically, since the matrix $\widehat{\mathbf{V}} = \mathbf{V}_1$ used to yield the core tensor is no longer orthogonal to the residual part $\mathbf{E} = \mathbf{U}_2\mathbf{L}_{21}\mathbf{V}_1^{\top} + \mathbf{U}_2\mathbf{L}_{22}\mathbf{V}_2^{\top}$. In this case, a larger bound can be given by a similar analysis of Theorem \ref{thm: 3.1} and the triangle inequality.

\begin{proposition}
	For the TT-ULV algorithm~\ref{alg:TT-ULV2} in the right-to-left sweep, if the error of the $k$th truncated ULV decomposition in step 4 is $\varepsilon_k$, then the total error of the computed TT-approximation tensor $\widehat{\mathcal{A}}$ satisfies
	\[ \| \mathcal{A}-\widehat{\mathcal{A}} \|_F \le \sum_{k=2}^{d} \varepsilon_k.\]
\end{proposition}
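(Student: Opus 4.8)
The plan is to mirror the proof of Theorem \ref{thm: 3.1}, proceeding by induction on the tensor order $d$ and peeling off one TT-core per iteration, but with the Pythagorean splitting replaced by the triangle inequality for the Frobenius norm. The reason for this replacement is precisely the obstruction flagged before the statement: in Algorithm \ref{alg:TT-ULV2} the core $\mathcal{G}^{(d)}$ is read off from $\mathbf{V}_1^{\top}$, yet when only $\mathbf{L}_{11}$ is retained the factor $\mathbf{V}_1$ is no longer orthogonal to the residual $\mathbf{E} = \mathbf{U}_2\mathbf{L}_{21}\mathbf{V}_1^{\top} + \mathbf{U}_2\mathbf{L}_{22}\mathbf{V}_2^{\top}$. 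Consequently the clean orthogonal split that yielded the sum-of-squares bound in Theorem \ref{thm: 3.1} and Corollary \ref{cor: 3.1} is unavailable, and only a sum of two norms can be controlled.

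First I would isolate the first iteration ($k=d$). Writing $\mathbf{A}_{d-1}$ for the $(d-1)$th unfolding, the truncated ULV step produces $\mathbf{A}_{d-1} = \mathbf{U}_1\mathbf{L}_{11}\mathbf{V}_1^{\top} + \mathbf{E}$ with $\|\mathbf{E}\|_F = \varepsilon_d$; the core $\mathcal{G}^{(d)}$ is folded from $\mathbf{V}_1^{\top}$, and $\mathbf{C} = \mathbf{U}_1\mathbf{L}_{11} \in \mathbb{R}^{(I_1\cdots I_{d-1})\times r_{d-1}}$ is passed to the next iteration. The output $\widehat{\mathcal{A}}$ therefore has $(d-1)$th unfolding $\widehat{\mathbf{C}}\mathbf{V}_1^{\top}$, where $\widehat{\mathbf{C}}$ is the TT-approximation of $\mathbf{C}$ built from the remaining cores. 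Inserting $\pm\,\mathbf{U}_1\mathbf{L}_{11}\mathbf{V}_1^{\top}$ and applying the triangle inequality gives
\[ \|\mathcal{A}-\widehat{\mathcal{A}}\|_F = \|\mathbf{A}_{d-1} - \widehat{\mathbf{C}}\mathbf{V}_1^{\top}\|_F \le \|\mathbf{E}\|_F + \|(\mathbf{C}-\widehat{\mathbf{C}})\mathbf{V}_1^{\top}\|_F. \]
Since $\mathbf{V}_1$ has orthonormal columns ($\mathbf{V}_1^{\top}\mathbf{V}_1 = \mathbf{I}_{r_{d-1}}$), right multiplication by $\mathbf{V}_1^{\top}$ preserves the Frobenius norm, so the second term equals $\|\mathbf{C}-\widehat{\mathbf{C}}\|_F$, and hence $\|\mathcal{A}-\widehat{\mathcal{A}}\|_F \le \varepsilon_d + \|\mathbf{C}-\widehat{\mathbf{C}}\|_F$.

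The recursion then closes by induction. The quantity $\widehat{\mathbf{C}}$ is exactly the approximation that Algorithm \ref{alg:TT-ULV2} computes for the $(d-1)$th-order tensor $\mathrm{reshape}(\mathbf{C}, [I_1,\dots, I_{d-2}, I_{d-1}r_{d-1}])$ obtained by merging the trailing rank $r_{d-1}$ into the last mode, whose successive ULV truncation errors are $\varepsilon_{d-1},\dots,\varepsilon_2$ by the definition of the $\varepsilon_k$. The inductive hypothesis gives $\|\mathbf{C}-\widehat{\mathbf{C}}\|_F \le \sum_{k=2}^{d-1}\varepsilon_k$, so combining with the displayed bound yields $\|\mathcal{A}-\widehat{\mathcal{A}}\|_F \le \varepsilon_d + \sum_{k=2}^{d-1}\varepsilon_k = \sum_{k=2}^{d}\varepsilon_k$. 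The base case $d=2$ is immediate: a single ULV truncation gives $\widehat{\mathcal{A}}=\mathbf{U}_1\mathbf{L}_{11}\mathbf{V}_1^{\top}$ and $\|\mathcal{A}-\widehat{\mathcal{A}}\|_F = \varepsilon_2$, with equality.

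I would expect the only delicate points to be bookkeeping rather than analysis: one must confirm that the reshape operations do not alter the residual norm at the first step, and that $\widehat{\mathbf{C}}$ genuinely coincides with the algorithm's run on the reduced tensor so that the induction applies verbatim. The conceptual heart of the statement — and the reason the bound degrades from the sharp $\sqrt{\sum_k \varepsilon_k^2}$ of Corollary \ref{cor: 3.1} to the looser $\sum_k \varepsilon_k$ — is simply that the lost orthogonality forces $\|\mathbf{E} + (\mathbf{C}-\widehat{\mathbf{C}})\mathbf{V}_1^{\top}\|_F$ to be estimated by the triangle inequality rather than by the Pythagorean identity.
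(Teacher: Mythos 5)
Your proof is correct and follows exactly the route the paper indicates (the paper gives no detailed proof, only the remark that the bound follows from ``a similar analysis of Theorem~\ref{thm: 3.1} and the triangle inequality of the Frobenius norm''). Your induction, the insertion of $\pm\,\mathbf{U}_1\mathbf{L}_{11}\mathbf{V}_1^{\top}$, the use of $\|\mathbf{M}\mathbf{V}_1^{\top}\|_F=\|\mathbf{M}\|_F$ for column-orthonormal $\mathbf{V}_1$, and the observation that the lost orthogonality between $\mathbf{V}_1$ and $\mathbf{E}$ forces the triangle inequality in place of the Pythagorean split are precisely the intended argument.
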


Based on above results, the TT-ULV algorithm in a right-to-left sweep can also be designed for a prescribed accuracy $\varepsilon$. If the ULV truncation in step 4 is taken as $\mathbf{U}_{1}\mathbf{L}_{11}\mathbf{V}^{\top}_{1}$, then the equal truncated error of each ULV decomposition should be $\varepsilon_k = \frac{\varepsilon}{d-1}\| \mathcal{A} \|_F> \frac{\varepsilon}{\sqrt{d-1}}\| \mathcal{A} \|_F$, which may generate the TT-cores with much higher ranks than TT-ULV algorithm~\ref{alg:TT-UTVver2} in the left-to-right sweep. On the other hand, it can also set $\varepsilon_k = \frac{\varepsilon}{\sqrt{d-1}}\| \mathcal{A} \|_F$, but the middle matrix of ULV truncation
$\widehat{\mathbf{L}}\in \mathbb{R}^{I_1\cdots I_{k-1}\times r_{k-1}}$ should retain a larger size, which leads to more computations and storage. Wherefore, we do not strongly recommend the TT-ULV algorithm for computing right-orthogonal cores using a right-to-left sweep. On the contrary, the TT-URV case is well-suited for a right-to-left sweep; see Section \ref{sec: 3.2} for details.

\section*{Acknowledgment.}  The authors would like to thank the handling editor and
referees for their detailed comments. This work was supported by the National Natural Science Foundation of China (Grant No. U24A2001, 12271108, and 12561095), the Natural Science Foundation of Sichuan Province (Grant No. 2026NSFSC0752), the Special Posts of Guizhou University (No. [2025]06), and the Guizhou Provincial Major Project of Basic Research Program (Qiankehe Foundation VZD[2026]001).

\bibliographystyle{ieeetr}


\bibliography{ref1}


\end{document}